\newtheorem{theorem}{Theorem}[section]
\newtheorem{prop}[theorem]{Proposition}
\newtheorem{lemma}[theorem]{Lemma}
\newtheorem{conj}{Conjecture}
\newtheorem{claim}[theorem]{Claim}
\theoremstyle{definition}
\newtheorem*{defn-non}{Definition}
\newtheorem{ques}[theorem]{Question}
\newlist{Case}{enumerate}{2}
\setlist[Case, 1]{%
    label           =   {\bfseries Case \arabic*.},
    labelindent=1em ,labelwidth=0.4cm, labelsep*=1em, leftmargin =!
}
\setlist[Case, 2]{%
    label           =   {\bfseries Subcase \arabic{Casei}.\arabic*.},
    labelindent=-1em ,labelwidth=1.0cm, labelsep*=1em, leftmargin =!
}
\newenvironment{poc}{\begin{proof}[Proof of claim]}{\end{proof}}
\newcommand{\aaa}{\boldsymbol{a}}
\newcommand{\bb}{\boldsymbol{b}}
\newcommand{\cc}{\boldsymbol{c}}
\newcommand{\dd}{\boldsymbol{d}}
\newcommand{\ee}{\boldsymbol{e}}
\title{A new variant of the Erd\H{o}s-Gy\'{a}rf\'{a}s problem on $K_{5}$ }
\author{
Gennian Ge\thanks{School of Mathematical Sciences, Capital Normal University, Beijing, China. Emails: gnge@zju.edu.cn, 1427307795@qq.com. Gennian Ge was supported by the National Key Research and Development Program of China under Grant 2020YFA0712100 and Grant 2018YFA0704703, the National Natural Science Foundation of China under Grant 11971325 and Grant 12231014, and Beijing Scholars Program.}
\and 
Zixiang Xu\thanks{Extremal Combinatorics and Probability Group (ECOPRO), Institute for Basic Science (IBS), Daejeon, South Korea. Email: zixiangxu@ibs.re.kr. Supported by IBS-R029-C4.}
\and
Yixuan Zhang\footnotemark[1]
}
\begin{document}
\maketitle
\begin{abstract}
 Motivated by an extremal problem on graph-codes that links coding theory and graph theory, Alon recently proposed a question aiming to find the smallest number $t$ such that there is an edge coloring of $K_{n}$ by $t$ colors with no copy of given graph $H$ in which every color appears an even number of times. When $H=K_{4}$, the question of whether $n^{o(1)}$ colors are enough, was initially emphasized by Alon. Through modifications to the coloring functions originally designed by Mubayi, and Conlon, Fox, Lee and Sudakov, the question of $K_{4}$ has already been addressed. Expanding on this line of inquiry, we further study this new variant of the generalized Ramsey problem and provide a conclusively affirmative answer to Alon's question concerning $K_{5}$.
\end{abstract}

\section{Introduction}
The symmetric difference is a fundamental operation in set theory and combinatorics. It is defined as the set of elements that are present in either of the two given sets but not in their intersection. Kleitman~\cite{1966Kleitman} established a celebrated result in extremal combinatorics, which determines the maximum cardinality of any subset $A\subseteq [n]$ such that the size of the symmetric difference, determined by the elements in $A$, is bounded by a positive integer. There have been many interesting variants and applications around Kleitman's Theorem in recent years~\cite{2017COdingadversarial,2022neighbor,2018CoolingCode,2023Kneighborly,2017CPCFrankl,2023GaoLiuXu,2020Huang,2003CPCSudakov,2017JCTAHamming}.

Recently, a graph-theoretic version of the restricted symmetric difference problem has also received widespread attention~\cite{2023AlonCodes,2023SIAMAlon}. Formally, Alon~\cite{2023AlonCodes} defined a notion of symmetric difference on graphs and introduced a concept called \emph{graph codes}. More precisely, the symmetric difference of two graphs $G_{1}=(V,E_{1})$ and $G_{2}=(V,E_{2})$ on the same vertex set $V$ is the graph $(V,E_{1}\oplus E_{2})$ where $E_{1}\oplus E_{2}$ is the symmetric difference defined to be the set of all edges that belong to exactly one of the two graphs. Let $\mathcal{H}$ be a family of graphs on the vertex set $[n]$ which is closed under isomorphism. We say that a collection $\mathcal{F}$ of graphs on $[n]$ is an \emph{$\mathcal{H}$-code} if
it contains no two members whose symmetric difference is a graph in $\mathcal{H}$. Then a natural question is studying the maximum possible cardinality of an $\mathcal{H}$-code, denoted by $D_{\mathcal{H}}(n)$. Let $d_{\mathcal{H}}(n):=\frac{D_{\mathcal{H}}(n)}{2^{\binom{n}{2}}}$ be the maximum possible fraction of the total number of graphs on $[n]$ in an $\mathcal{H}$-code. If $\mathcal{H}$ only contains a single graph $H$, we also write them as $D_{H}(n)$ and $d_{H}(n)$ for simplicity. As the symmetric differences determined by a family of graphs on $[n]$ are evidently much more complicated than that determined by a collection of subsets of $[n]$, it appears to be impossible to expect the establishment of concise and elegant conclusions analogous to Kleitman's Theorem. Therefore, an initial goal is to study the asymptotic behaviors of the functions $d_{\mathcal{H}}(n)$ for various families $\mathcal{H}$ of graphs.

\begin{ques}
    For a collection of graphs $\mathcal{H}$, determine the order of growth of $d_{\mathcal{H}}(n)$ when $n\rightarrow\infty$.
\end{ques}
Regarding this question, there have already been some interesting results in the past. For instance, a breakthrough of Ellis, Filmus and Friedgut~\cite{2012JEMSDavidEllis} indicates that if $\mathcal{H}$ consists of all graphs with independence number at most $2$, then $d_{\mathcal{H}}(n)=\frac{1}{8}$ for all $n\ge 3$. Moreover, if $\mathcal{H}$ is the family of all graphs with independence number at most $3$, Berger and Zhao~\cite{2021YufeiZhao} recently proved that $d_{\mathcal{H}}(n)=\frac{1}{64}$ for all $n\ge 4$. Alon, Gujgiczer, K\"{o}rner, Milojevi\'{c} and Simonyi~\cite{2023SIAMAlon} also studied this function for families of graphs with special propeties, including the families of all disconnected graphs, all graphs that are not 2-connected, all non-Hamiltonian graphs and all graphs that contain or do not contain a spanning star.

Very recently, Alon~\cite{2023AlonCodes} conducted a comprehensive investigation of this extremal problem, focusing on various graphs, including cliques, stars $K_{1,t}$, and matchings $M_{1,t}$ consisting of $t$ edges. Several results obtained were asymptotically tight, namely, $d_{K_{1,2k}}(n)=\Theta_{k}(\frac{1}{n^{k}})$ and $d_{M_{1,2k}}(n)=\Theta_{k}(\frac{1}{n^{k}})$. Notably, the study of cliques poses a particular interest as it is closely associated with another problem originally proposed by Gowers~\cite{2009Gowers} in 2009.

\begin{conj}[Gowers~\cite{2009Gowers}]
    For every $\delta>0$, there exists $n$ such that if $\mathcal{A}$ is any collection of at least $\delta\cdot 2^{\binom{n}{2}}$ graphs with vertex set $[n]$, then $\mathcal{A}$ contains two distinct graphs $G$ and $H$ such that $G\subseteq H$ and $H\setminus G$ is a clique.
\end{conj}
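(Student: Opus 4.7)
The plan is to attempt a density-increment plus vertex-sampling strategy, since the family of cliques of all sizes forms a particularly non-algebraic subset of $\mathbb{F}_{2}^{\binom{n}{2}}$ and one cannot hope for a direct linear-algebraic or Fourier argument to handle all $K_{t}$ uniformly. First I would reformulate the statement: let $\mathcal{C}=\{E(K_{S}):S\subseteq[n],\,|S|\ge 2\}$, view $\mathcal{A}$ as a subset of $\mathbb{F}_{2}^{\binom{n}{2}}$, and note that the conjecture asserts the existence of $G_{1},G_{2}\in\mathcal{A}$ with $G_{1}\oplus G_{2}\in\mathcal{C}$. The trivial case of $K_{2}$ already follows by projecting onto the hyperplane obtained by deleting a fixed edge: this partitions the $2^{\binom{n}{2}}$ graphs into pairs, so any $\mathcal{A}$ of density $>1/2$ must contain two graphs differing in one edge. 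The real difficulty lies in the small-$\delta$ regime, where even the very long cliques $K_{t}$ with $t$ close to $n$ must be exploited.

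My main tool would be an induction on $n$ driven by vertex-sampling. Fix a vertex $v$ and, for each candidate neighborhood $N\subseteq [n]\setminus\{v\}$, define $\mathcal{A}_{N}\subseteq \mathcal{A}$ to be the graphs $G$ with $N_{G}(v)=N$; its projection $\pi(\mathcal{A}_{N})$ lives on $[n]\setminus\{v\}$. If one could guarantee that for many $N$ the projected family $\pi(\mathcal{A}_{N})$ has density at least $\delta'=\delta'(\delta)>0$ on $\{0,1\}^{\binom{n-1}{2}}$, then applying the inductive hypothesis produces $G_{1}',G_{2}'\in\pi(\mathcal{A}_{N})$ differing by a clique $K_{S}$ with $S\subseteq[n]\setminus\{v\}$, and lifting back yields the required pair in $\mathcal{A}$. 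Simple averaging yields one $N$ of density $\delta$ but loses a factor of $2^{n-1}$ in cardinality, so to keep $\delta'$ bounded one would need to show that $\mathcal{A}$ is either close to a product structure (in which case we win for other reasons) or has many simultaneously dense fibers.

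The truly hard part, and where I expect the argument to stall, concerns cliques that use the sampled vertex $v$: a clique $K_{S}$ with $v\in S$ forces the pair in $\mathcal{A}$ to have different neighborhoods at $v$, so the induction over fixed-neighborhood fibers cannot see them. To address this I would attempt a second reduction that operates on pairs of fibers: for each pair $(N_{1},N_{2})$ with $N_{1}\oplus N_{2}$ equal to the vertex set of a potential clique through $v$, look for a matching between $\pi(\mathcal{A}_{N_{1}})$ and $\pi(\mathcal{A}_{N_{2}})$ that forces a compatible clique-difference on $[n]\setminus\{v\}$. Combined with a Fourier or containers step that either extracts a product-structured core of $\mathcal{A}$ or deduces nontrivial character cancellation on $\mathcal{C}$, one would hope to close the recursion.

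The decisive obstacle is that $\mathcal{C}$ is not closed under $\oplus$ and its indicator admits no convenient character-sum estimate: the Fourier coefficient $\widehat{\mathbbm{1}_{\mathcal{C}}}(F)$ is governed by the number of cliques $K_{S}$ whose edge-set contains a given edge-set $F$, and this count depends erratically on the combinatorial shape of $F$. Overcoming this appears to require a \emph{clique-removal} statement on the Boolean cube $\{0,1\}^{\binom{n}{2}}$ of the form: if only $o(|\mathcal{A}|^{2})$ ordered pairs in $\mathcal{A}\times\mathcal{A}$ have difference in $\mathcal{C}$, then $\mathcal{A}$ is contained in a low-complexity set of density $o(1)$. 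I would pursue such a statement via hypergraph regularity applied to the $\binom{n}{2}$-uniform structure induced by $\mathcal{C}$, but I do not expect to succeed without a fundamentally new ingredient, which is presumably the reason the conjecture has remained open for more than a decade.
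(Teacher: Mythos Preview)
The statement you are attempting to prove is Gowers' \emph{conjecture}; the paper does not prove it, nor does it claim to. It is quoted solely as motivation for the study of $d_{\mathcal{H}}(n)$ and of the coloring problem $f(n,H)$. There is therefore no ``paper's own proof'' to compare against.

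Your proposal is not a proof but an outline of a strategy, and you yourself correctly identify where it breaks down: the vertex-fiber induction only detects cliques avoiding the sampled vertex $v$, and the cross-fiber step you describe for cliques through $v$ has no mechanism to force the required clique-difference simultaneously on the star at $v$ and on the rest. The hoped-for ``clique-removal'' lemma on $\{0,1\}^{\binom{n}{2}}$ is essentially a restatement of the conjecture in removal-lemma language, and no known regularity or Fourier machinery yields it, precisely because $\mathcal{C}$ is not closed under $\oplus$ and has no usable Fourier structure. In short, the proposal does not close the gap and the problem remains open, consistent with its status as a conjecture in the paper.
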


Let $\mathcal{K}_{r}$ denote the family of all cliques on at most $r$ vertices. Alon~\cite{2023AlonCodes} also proved that $d_{\mathcal{K}_{r}}(n)\geq\Omega(\frac{1}{n^{r}})$. While for a single clique $K_{r}$,
Alon~\cite{2023AlonCodes} then built a connection between this problem and a new coloring problem, which can be viewed as a variant of the Erd\H{o}s-Gy\'{a}rf\'{a}s problem.
The Erd\H{o}s-Gy\'{a}rf\'{a}s problem itself is a very important generalized Ramsey problem, which asks the minimum number of colors $f(n,p,q)$ needed to color the edges of $K_{n}$ such that every copy of $K_{p}$ receives at least $q$ colors. The function $f(n,p,q)$ was introduced about 50 years ago and first studied by Erd\H{o}s and Gy\'{a}rf\'{a}s~\cite{1997EG}. The authors in~\cite{1997EG} claimed that the most annoying problem among the small cases is the behavior of $f(n,4,3)$, and proved that $f(n,4,3)\le O(\sqrt{n})$. Mubayi~\cite{1998Mubayi43} developed an elegant subset ranking coloring and successfully showed $f(n,4,3)=n^{o(1)}$. Later, the subset ranking coloring became a milestone work in the study of this problem, suitable modifications provide a lot of nice lower bounds on $f(n,p,q)$ for different parameters $p$ and $q$~\cite{2018CPC55,2023CPC6688,2015PLMSCFLS,2000Mubayi54,2004Mubayi44}. In particular, Conlon, Fox, Lee and Sudakov~\cite{2015PLMSCFLS} completely proved that $f(n,p,p-1)=n^{o(1)}$ for any integer $p\ge 4$, which yields that $q=p$ is the smallest value of $q$ for which $f(n,p,q)$ is polynomial in $n$, together with $f(n,p,p)=\Omega(n^{\frac{1}{p-2}})$ proven in~\cite{1997EG}. 

Back to the problem on $\mathcal{H}$-codes, Alon~\cite{2023AlonCodes} originally emphasized that the problem is particularly interesting when $\mathcal{H}=\{K_{4}\}$ because he found that, if one can show the existence of an edge coloring of $K_{n}$ by $n^{o(1)}$ many colors with no copy of $K_{4}$ in which every color appears an even number of times, then $d_{K_{4}}(n)\ge\frac{1}{n^{o(1)}}$. It was pointed out by Hunter and Mubayi that the existence of such coloring function has already been shown by Cameron and Heath~\cite{2023CPC6688} via modifying the coloring functions in~\cite{2015PLMSCFLS,1998Mubayi43} (see, the Concluding remarks and open problems in~\cite{2023AlonCodes}). Then Alon suggested this new variant of the Erd\H{o}s-Gy\'{a}rf\'{a}s problem deserves further study. Formally, let $f(n,H)$ be the smallest number of colors in an edge coloring of $K_{n}$ in which every copy of a given graph $H$ intersects at least one of the color classes by an odd number of edges. In particular, we only need to focus on the case that $e(H)$ is even.

\begin{ques}\label{ques:main}
For a given graph $H$ with $e(H)$ being even, determine the order of growth of $f(n,H)$ when $n\rightarrow\infty$. 
\end{ques}

 In this paper, we further explore Question~\ref{ques:main} and show the following upper bound regarding $K_{5}$ when $n\rightarrow\infty$.

\begin{theorem}\label{thm:K5}
$f(n,K_{5})\le e^{O((\log{n})^{2/3}\log\log{n})}$.
\end{theorem}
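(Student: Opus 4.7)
My plan is to construct an explicit edge coloring of $K_{n}$ with $e^{O((\log n)^{2/3}\log\log n)}$ colors such that no copy of $K_{5}$ has all color classes of even size. The construction will adapt the hierarchical alphabet colorings of Mubayi~\cite{1998Mubayi43} and Conlon--Fox--Lee--Sudakov~\cite{2015PLMSCFLS} to the parity setting, in the spirit of the Cameron--Heath modification~\cite{2023CPC6688} that settled the $K_{4}$ case. Concretely, I identify $V(K_{n})$ with $[q]^{t}$, where $q^{t}=n$ and the target parameters are $q=e^{\Theta((\log n)^{1/3})}$, $t=\Theta((\log n)^{2/3})$. For an edge $xy$ with $x=(x_{1},\ldots,x_{t})$, $y=(y_{1},\ldots,y_{t})$ and $D(x,y)=\{i:x_{i}\neq y_{i}\}$, the color of $xy$ will be a triple $\bigl(D(x,y),\sigma(xy),\tau(xy)\bigr)$: here $\sigma(xy)$ is the Mubayi-type value signature recording the unordered pair $\{x_{i},y_{i}\}$ for each $i\in D(x,y)$, and $\tau(xy)$ is an auxiliary algebraic signature (for instance a low-degree polynomial evaluation in a finite field) tailored to spoil bad parity configurations on $K_{5}$.

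\textbf{Parity analysis on $K_{5}$.} Suppose for contradiction that some $K_{5}$ on vertices $v_{1},\ldots,v_{5}$ has every color appearing an even number of times among its $10$ edges. The $(D,\sigma)$-part of the coloring forces any two same-colored edges to be ``parallel'' coordinate-wise, so the even-parity hypothesis groups the $10$ edges into $(D,\sigma)$-classes each of even size. Following the CFLS-style structural analysis, I would enumerate the possible ``shapes'' of such a $K_{5}$ (classifying how the five vertices pairwise agree and disagree across the $t$ coordinates) into a short list; in each shape, combining the parity hypothesis with the recorded values of $\tau$ on the $10$ edges yields an algebraic identity among the coordinates of $v_{1},\ldots,v_{5}$ that the signature $\tau$ will be designed not to admit.

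\textbf{Counting and parameter choice.} The number of Mubayi-type signatures $(D,\sigma)$ is at most $\sum_{d}\binom{t}{d}q^{2d}$, and the number of possible values of $\tau$ is at most a fixed polynomial in $q$. With $q=e^{\Theta((\log n)^{1/3})}$ and $t=\Theta((\log n)^{2/3})$, these contributions balance to yield a total of $e^{O((\log n)^{2/3}\log\log n)}$ colors, matching the target bound; the exponents $1/3$ and $2/3$ arise, as usual, from optimizing a trade-off between the size of the alphabet and the length of the strings.

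\textbf{Main obstacle.} The crux is the parity analysis on $K_{5}$. Compared with $K_{4}$ (with $6$ edges and a short list of shapes), a $K_{5}$ has $10$ edges that can be partitioned into even-sized color classes in many more ways, and the shape classification is correspondingly richer. The hardest case I anticipate is when the five vertices all share a long common coordinate prefix, so that their pairwise differences are supported on a small block: then the even-parity condition imposes several coupled algebraic identities on the $\binom{5}{2}=10$ pairwise coordinate values simultaneously, and designing the signature $\tau$ of the correct degree to obstruct \emph{all} of them at once --- without inflating the color count beyond the target --- is the main technical burden, strictly more delicate than its analogue in the Cameron--Heath argument for $K_{4}$.
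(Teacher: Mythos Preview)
Your color count is the genuine gap, and it is fatal as stated. Recording the full difference set $D(x,y)$ together with the unordered pairs $\{x_i,y_i\}$ for \emph{every} $i\in D(x,y)$ uses
\[
\sum_{d=0}^{t}\binom{t}{d}\binom{q}{2}^{d}\;=\;(1+\tbinom{q}{2})^{t}\;\ge\;q^{t}\;=\;n
\]
colors, not $n^{o(1)}$; your parameter choice $q^{t}=n$ does not ``balance'' anything here. What you have described is neither Mubayi's coloring (which records only the \emph{first} coordinate of disagreement) nor the CFLS coloring; it is a flat full-signature coloring, and its size is polynomial in $n$. The $(\log n)^{2/3}$ exponent in the target bound comes precisely from the three-level hierarchical block structure of CFLS: one views the vertex set as $\{0,1\}^{\beta^{3}}$, partitions into blocks of sizes $1,\beta,\beta^{2},\beta^{3}$, and at each scale records only the first sub-block of disagreement. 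That recursive compression is what brings the color count down to $e^{O(\beta^{2}\log\beta)}=e^{O((\log n)^{2/3}\log\log n)}$, and your plan omits it entirely.

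Beyond that, the paper does not use any algebraic signature $\tau$. The auxiliary component is a coordinate-wise lexicographic sign vector $\Delta\in\{-1,0,+1\}^{\beta}$, already present in the Cameron--Heath coloring. The parity analysis then proceeds not by obstructing algebraic identities but by a finite case check: known properties of $\psi$ (no striped $K_4$, every $K_5$ sees $\ge 4$ colors, etc.) force color type $(2,2,2,4)$ to be impossible, and type $(2,2,2,2,2)$ reduces to exactly five non-isomorphic configurations classified by how many monochromatic $K_{1,2}$'s they contain; each of the five is then excluded by tracking the ``first difference'' index through the hierarchy, with the sign vector $\Delta$ used only in two of the cases. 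Your sketch does identify the shape enumeration as the crux, but the concrete mechanism you propose (a polynomial-evaluation tag) is orthogonal to what is actually needed and is left entirely unspecified.
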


By a simple application of the celebrated Lov\'{a}sz Local Lemma~\cite{2016AlonProb,1975Locallemma}, we can show the following general upper bound. 
\begin{theorem}\label{thm:LLL}
    For any positive integer $p\ge 4$ with $p\equiv 0,1\pmod{4}$, we have
    \begin{equation*}
        f(n,K_{p})=O(n^{\frac{4(p-2)}{p(p-1)}}).
    \end{equation*}
\end{theorem}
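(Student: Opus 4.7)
The plan is to exhibit the desired coloring via the symmetric Lov\'{a}sz Local Lemma. Color each edge of $K_n$ independently and uniformly from a palette of $t$ colors, with $t := \lceil C_p\, n^{4(p-2)/(p(p-1))}\rceil$ for a sufficiently large constant $C_p = C(p)$. For each copy $S$ of $K_p$ in $K_n$, let $A_S$ denote the bad event that inside the $m := \binom{p}{2}$ edges of $S$ every color appears an even number of times; a coloring in which no $A_S$ occurs is precisely a valid coloring witnessing $f(n,K_p) \le t$, so it suffices to show $\Pr[\bigcap_S \overline{A_S}] > 0$.

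The main step is a sharp bound on $\Pr[A_S]$. Using the character identity $\mathbbm{1}[k \text{ even}] = \tfrac{1}{2}(1+(-1)^k)$ to enforce the parity of $|\chi^{-1}(c) \cap E(S)|$ for each color $c$, then swapping sums and using that each edge is colored independently, one obtains
\begin{equation*}
\Pr[A_S] \;=\; \frac{1}{t^m\, 2^t}\sum_{k=0}^{t}\binom{t}{k}(t - 2k)^m \;=\; \mathbb{E}\!\left[\left(1 - \tfrac{2X}{t}\right)^{\!m}\right] \;=\; \frac{\mathbb{E}[S_t^m]}{t^m},
\end{equation*}
where $X \sim \mathrm{Bin}(t, \tfrac{1}{2})$ and $S_t := \sum_{i=1}^{t}\epsilon_i$ is a Rademacher sum of length $t$. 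The hypothesis $p\equiv 0, 1 \pmod 4$ forces $m = \binom{p}{2}$ to be even, so the nonvanishing contributions to $\mathbb{E}[S_t^m]$ come from index sequences in which each value in $[t]$ appears an even number of times, dominated by pair-matchings of the $m$ positions paired with $m/2$ distinct values in $[t]$. This yields $\mathbb{E}[S_t^m] = (m-1)!!\, t^{m/2} + O_m(t^{m/2-1})$, and hence $\Pr[A_S] = O_p(t^{-p(p-1)/4})$.

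The event $A_S$ depends only on the colors of $E(S)$, so it is mutually independent of every $A_{S'}$ with $E(S') \cap E(S) = \varnothing$. Since each edge lies in $\binom{n-2}{p-2}$ copies of $K_p$ and $|E(S)| = \binom{p}{2}$, the dependency degree is at most $\binom{p}{2}\binom{n-2}{p-2} = O_p(n^{p-2})$, and the symmetric Lov\'{a}sz Local Lemma applies as long as
\begin{equation*}
e \cdot O_p\!\left(t^{-p(p-1)/4}\right) \cdot O_p(n^{p-2}) \;<\; 1,
\end{equation*}
which, with our choice $t = \Theta_p(n^{4(p-2)/(p(p-1))})$, holds once $C_p$ is chosen large enough, delivering the claimed bound. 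The only nonroutine ingredient is the probability estimate, but because $m$ depends only on $p$ while $t$ grows polynomially in $n$, the asymptotic expansion of $\mathbb{E}[S_t^m]$ via pair matchings is entirely elementary; the rest is a standard LLL bookkeeping.
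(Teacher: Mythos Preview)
Your proof is correct and follows the same strategy as the paper: color edges of $K_n$ uniformly at random with $t=\Theta_p\bigl(n^{4(p-2)/(p(p-1))}\bigr)$ colors and apply the symmetric Lov\'asz Local Lemma with dependency degree $D=\binom{p}{2}\binom{n-2}{p-2}$. The only difference is in estimating $\Pr[A_S]$: the paper uses the crude union bound $\Pr[A_S]\le \sum_{i\le m/2} t^{i}(i/t)^{m}=O_p(t^{-m/2})$ over the number $i$ of colors appearing in $S$, whereas you compute $\Pr[A_S]$ exactly as $t^{-m}\mathbb{E}[S_t^{m}]$ via the character identity and read off the same $O_p(t^{-m/2})$ from the Rademacher moment---both are short and elementary, and lead to the identical LLL calculation.
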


Note that the exponent $\frac{4(p-2)}{p(p-1)}$ tends to $0$ as $p$ becomes larger, thus we believe that the following conjecture holds.

\begin{conj}\label{conj:cliques}
    For any positive integer $p\ge 4$ with $p\equiv 0,1\pmod{4}$, $f(n,K_{p})=n^{o(1)}$.
\end{conj}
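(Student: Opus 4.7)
The conjecture asks for $f(n,K_p)=n^{o(1)}$ whenever $p\equiv 0,1\pmod 4$ (the congruence ensuring $\binom{p}{2}$ is even, so that the problem is non-degenerate). The plan is to imitate the layered construction behind \cref{thm:K5}, building on the subset-ranking coloring of Mubayi~\cite{1998Mubayi43} and its refinement by Conlon--Fox--Lee--Sudakov~\cite{2015PLMSCFLS}, which already gives $f(n,p,p-1)=n^{o(1)}$. The base coloring is the CFLS coloring for $f(n,p,p-1)$, which uses $n^{o(1)}$ colors and forces at least $p-1$ distinct colors on every copy of $K_p$; on top of it one overlays a carefully chosen set of \emph{parity-breaking} auxiliary colorings designed to falsify any residual all-even configuration, while keeping the total color count subpolynomial.

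First I would classify the \emph{bad local patterns} inside a $K_p$: configurations in which every color class of the base coloring has even size. In the CFLS construction, an edge $\{u,v\}$ receives a color encoding the first coordinate on which the base-$t$ representations of $u,v\in[t]^k$ differ together with the two values at that coordinate. Restricted to a fixed $K_p$, this stratifies the $\binom{p}{2}$ edges into a laminar family of groups (indexed by the distinguishing level) and, within each group, a finer partition by coordinate values. The event that every class has even size becomes a short list of admissible laminar-partition-plus-label patterns, and because $\binom{p}{2}$ is even these patterns are genuinely consistent; mere color-count bounds will not eliminate them. Next I would introduce $L=\mathrm{polylog}(n)$ algebraic parity colorings over $\mathbb{F}_2$: independent vertex labelings $\psi_j\colon [n]\to\mathbb{F}_2^{m_j}$ with $m_j=\mathrm{polylog}(n)$, coloring $\{u,v\}$ by the tuple $(\psi_1(u)+\psi_1(v),\dots,\psi_L(u)+\psi_L(v))$. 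Since $\exp(Lm)=n^{o(1)}$ and the base palette is already $n^{o(1)}$, the product palette stays in the target regime. A random choice of the $\psi_j$ together with a Lovász Local Lemma argument, in the spirit of \cref{thm:LLL} but applied only to the vastly reduced list of admissible all-even patterns surviving the CFLS layer, should suffice to break every one of them.

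The main obstacle, and the reason this is only a program, is controlling the coupling across layers. Inside a single $K_p$ the CFLS coloring already decomposes the edges into sub-cliques $K_{p_i}$ corresponding to groups of vertices sharing the high-order coordinates; to eliminate all-even patterns on $K_p$ by auxiliary colorings one appears forced to solve analogous parity problems on these smaller cliques, driving an induction on $p$. The inductive base then splits into the two subcases $p\equiv 0$ and $p\equiv 1\pmod 4$, and the $K_4$ and $K_5$ constructions already differ substantially, so it is unclear whether a single uniform parity-breaking gadget handles general $p$ or whether each residue class requires a bespoke modification. Quantitatively, the tight constraint is to keep the recursion depth and per-layer cost both within $\mathrm{polylog}(n)$ factors, since even a mildly super-polylogarithmic blowup at any layer would push the total count above $n^{o(1)}$; matching the $e^{O((\log n)^{2/3}\log\log n)}$ bound of \cref{thm:K5} for larger $p$ would require an improved understanding of how the admissible all-even patterns scale with $p$, which is exactly the combinatorial content the conjecture leaves open.
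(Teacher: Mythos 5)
This statement is a conjecture that the paper leaves open: it offers only the supporting evidence of \cref{thm:K5} and \cref{thm:LLL}, and the concluding remarks explicitly say that existing approaches appear insufficient. Your text is, by your own description, a program rather than a proof, so it does not settle the statement; more importantly, its central gadget fails for a concrete structural reason. An $\mathbb{F}_2$-linear overlay $\{u,v\}\mapsto(\psi_1(u)+\psi_1(v),\dots,\psi_L(u)+\psi_L(v))$ satisfies, on every $4$-set $\{a,b,c,d\}$, the identity that the three pairs of opposite edges have equal color-sums; consequently, whenever $\psi_j(a)+\psi_j(b)+\psi_j(c)+\psi_j(d)=0$ for all $j$, the overlay is itself striped on that $K_4$ (each of the three perfect matchings monochromatic), so it refines every even color class of the base coloring into even classes and breaks nothing. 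Guaranteeing that this degeneracy never occurs amounts to requiring the vertex labels to form a Sidon set in $\mathbb{F}_2^{\sum_j m_j}$, which forces the overlay palette to have size $\Omega(n^{2})$; with a subpolynomial palette, pigeonhole produces many $4$-sets on which the overlay is striped. Since color type $(2,2,2)$ on a $K_4$ is exactly the archetypal all-even configuration --- the one that \cref{prop:Key}~(2) must exclude by a bespoke argument, and one that the CFLS guarantee of at least $p-1\ge 3$ colors per $K_4$ does \emph{not} rule out --- the linear overlay is structurally incapable of the parity-breaking you assign to it.

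The randomized fallback fares no better quantitatively. For a fixed base-bad $K_p$, the probability that a random overlay fails to break it is at least $1/T$, where $T$ is the overlay palette size (for instance, the event that $\psi_j(a)+\psi_j(b)+\psi_j(c)+\psi_j(d)=0$ for all $j$ has probability exactly $1/T$), while the dependency degree among copies of $K_p$ sharing an edge is $\Theta(n^{p-2})$. Restricting attention to ``admissible patterns surviving the CFLS layer'' does not shrink this degree, because dependency is governed by shared vertices, not by which color patterns are admissible; the Local Lemma therefore demands $T=n^{\Omega(1)}$, which is precisely the barrier that confines \cref{thm:LLL} to a polynomial bound. What actually achieves $n^{o(1)}$ for $p=4,5$ is entirely deterministic and non-linear: the first-differing-block data $\eta_d$ together with the order comparisons $\Delta$, an exhaustive classification of the surviving all-even configurations (\cref{Allpossible}), and a case analysis eliminating each one. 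That classification is the part your proposal defers to ``a short list of admissible patterns'' without bounding it, and it is the part the paper itself warns becomes unmanageable by hand already around $p=8$. So the proposal neither reproduces an argument from the paper (there is none for this statement) nor constitutes a viable route to the conjecture as written.
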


As for lower bounds on $f(n,K_{p})$ with positive integer $p\equiv 0,1\pmod{4}$, if for any $\ell$-coloring of $K_{n}$, there is a monochromatic copy of $K_{p}$, then we can find a copy of colored $K_{p}$ in which the single color appears an even number of times. By the best known results on the multi-color Ramsey numbers on cliques $R(p;\ell)\ge 2^{0.383796(\ell-2)p+\frac{p}{2}+o(p)}$~\cite{2022JCTASawin} (also see~\cite{2021ADVConlonFerber,2021PAMSWigderson}), we have that $f(n,K_{p})\ge\Omega_{p}(\frac{\log{n}}{\log{\log{n}}})$. 

Indeed, we have a better lower bound $f(n,K_{5})\ge\Omega(\log{n})$. To see this, we take advantage of a result of Fox and Sudakov~\cite{2008SIAMFoxSudakov}, who showed that if $n$ is large enough and $k\ge c\log{n}$ with some constant $c>0$, then for any edge coloring of $K_{n}$ with $k$ colors, we can find a copy of $K_{4}$ which is monochromatic or is a $C_{4}$ in one color and a matching in the other color. This immediately gives that $f(n,K_{4})\ge\Omega(\log{n})$. Furthermore, for any $k$-coloring of $K_{n}$, and for any vertex $v$, by pigeonhole principle there is a subset $U\subseteq V(K_{n})$ of size $\frac{n-1}{k}$ such that the colors of $uv$ are identical for all $u\in U$. Applying the above result of Fox and Sudakov on the set $U$, we can find a colored $K_{5}$ in which every color appears an even number of times if $k\ge c\log{(\frac{n-1}{k})}$, which implies that $f(n,K_{5})\ge\Omega(\log{n})$. A slightly weaker lower bound $f(n,K_{5})\ge\Omega(\frac{\log{n}}{\log{\log{\log{n}}}})$ could be obtained from the main result in~\cite{2008CPCMubayi}.

\medskip

\noindent\textbf{Note added.} Heath and Zerbib~\cite{2023Emily} also independently proved that $f(n,K_{5})\le n^{o(1)}$. Moreover, they noted that $f(n,K_{p})=O(n^{\frac{4(p-2)}{p(p-1)}}/(\log{n})^{\frac{4}{p(p-1)}})$, which slightly improves the result in Theorem~\ref{thm:LLL}.

\medskip

\noindent\textbf{Organization.} The rest of this paper is organized as follows. We simply show the general upper bound on $f(n,K_{p})$ in Section~\ref{sec:LLL}. Then we turn to prove Theorem~\ref{thm:K5}. First we introduce the coloring function $\psi$ in details and collect several nice properties of $\psi$ in Section~\ref{sec:ColoringFunction}. After classifying all possible colored $K_{5}$ under coloring function $\psi$, we then formally prove the main result in Section~\ref{sec:FormalProof}. Finally we conclude and discuss some further problems in Section~\ref{sec:Remarks}.\\

{\bf \noindent Notations.} In this paper, usually the vertex sets are $\{0,1\}^{k}$, thus we can view the vertices as binary vectors of length $k$ and we typically use bold typeface to represent the vectors $\boldsymbol{v}\in\{0,1\}^{k}$. Given two different binary vectors of the same length, say $\boldsymbol{v}=(v_{1},v_{2},\ldots,v_{k})$ and $\boldsymbol{u}=(u_{1},u_{2},\ldots,u_{k})$, the order of the two vectors depends on the alphabetic order of the symbols in the first place $i$ where they differ (counting from the beginning of the vectors), then $\boldsymbol{v}\prec\boldsymbol{u}$ if and only if $v_{i}<u_{i}$. The notation $\boldsymbol{v}\preceq\boldsymbol{u}$ means $\boldsymbol{v}=\boldsymbol{u}$ or $\boldsymbol{v}\prec\boldsymbol{u}$. For the sake of clarity of presentation, we omit floor and ceiling signs whenever they are not essential.

\section{General upper bound for $f(n,K_{p})$}\label{sec:LLL}
We will take advantage of the following symmetric version of Lov\'{a}sz Local Lemma~\cite{2016AlonProb}.

\begin{lemma}[Lov\'{a}sz Local Lemma]
    Let $A_{1},A_{2},\ldots,A_{k}$ be a sequence of events such that each event occurs with probability at most $p$ and such that each event is independent of all the other events except for at most $D$ of them. If $ep(D+1)<1$, then with positive probability, none of the events occurs.
\end{lemma}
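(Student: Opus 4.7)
The plan is to apply the standard inductive argument for the Lov\'asz Local Lemma, strengthened to a conditional probability bound. For each index $i$ let $\Gamma(i)\subseteq[k]\setminus\{i\}$ denote the set of (at most $D$) indices on which $A_i$ depends. I would prove, by strong induction on $|S|$, the following claim: for every $i\in[k]$ and every $S\subseteq[k]\setminus\{i\}$,
$$\mathbb{P}\Bigl(A_i\,\Bigm|\,\bigcap_{j\in S}\overline{A_j}\Bigr)\le\frac{1}{D+1}.$$
Given this claim, the theorem follows from the chain rule: the joint non-occurrence probability telescopes into a product of conditionals, and
$$\mathbb{P}\Bigl(\bigcap_{i=1}^{k}\overline{A_i}\Bigr)=\prod_{i=1}^{k}\mathbb{P}\Bigl(\overline{A_i}\,\Bigm|\,\bigcap_{j<i}\overline{A_j}\Bigr)\ge\Bigl(1-\frac{1}{D+1}\Bigr)^{k}>0.$$

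For the inductive step I would split $S=S_1\cup S_2$ with $S_1=S\cap\Gamma(i)$ and $S_2=S\setminus\Gamma(i)$, and rewrite the target conditional probability as the ratio
$$\frac{\mathbb{P}\!\bigl(A_i\cap\bigcap_{j\in S_1}\overline{A_j}\,\bigm|\,\bigcap_{j\in S_2}\overline{A_j}\bigr)}{\mathbb{P}\!\bigl(\bigcap_{j\in S_1}\overline{A_j}\,\bigm|\,\bigcap_{j\in S_2}\overline{A_j}\bigr)}.$$
The numerator is bounded by $\mathbb{P}(A_i\mid\bigcap_{j\in S_2}\overline{A_j})=\mathbb{P}(A_i)\le p$ by mutual independence of $A_i$ from the family $\{A_j:j\in S_2\}$. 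For the denominator I would telescope over the events in $S_1$ in an arbitrary order and apply the inductive hypothesis to each factor; at every step the conditioning set has size strictly less than $|S|$, so each factor is at least $1-\frac{1}{D+1}$, giving the lower bound $\bigl(1-\frac{1}{D+1}\bigr)^{|S_1|}\ge\bigl(1-\frac{1}{D+1}\bigr)^{D}>1/e$, where the last inequality is an elementary estimate. Combining, the ratio is at most $ep$, which is strictly less than $\frac{1}{D+1}$ by the hypothesis $ep(D+1)<1$, completing the inductive step. The base case $S=\emptyset$ is immediate from $p<\frac{1}{e(D+1)}<\frac{1}{D+1}$.

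The main delicate point is the independence step invoked in the numerator: one needs $A_i$ to be independent not merely of each $A_j$ with $j\in S_2$ individually, but of any Boolean combination of those events. This is exactly the content of mutual independence from a family of events, which the dependency graph formulation of the Local Lemma encodes (typically realized by representing each $A_i$ as a function of a designated subset of coordinates in an underlying product probability space). Once this is taken as the working definition, everything else in the argument is routine inductive bookkeeping, and the strict inequality $ep(D+1)<1$ provides exactly the slack needed to close the induction.
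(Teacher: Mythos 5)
The paper does not prove this lemma at all: it is quoted as a classical result with a citation to Alon--Spencer, so there is no in-paper argument to compare yours against. Your proposal is the standard and correct inductive proof of the symmetric Local Lemma: the strengthened claim $\mathbb{P}\bigl(A_i\mid\bigcap_{j\in S}\overline{A_j}\bigr)\le\frac{1}{D+1}$, the split of $S$ into neighbours and non-neighbours of $i$, the bound $p$ on the numerator via mutual independence, and the lower bound $\bigl(1-\frac{1}{D+1}\bigr)^{D}>e^{-1}$ on the telescoped denominator all check out, and the hypothesis $ep(D+1)<1$ closes the induction exactly as you say. You are also right to flag that the phrase ``independent of all the other events except for at most $D$ of them'' must be read as mutual independence from the whole family of non-neighbours (pairwise independence would not suffice); this is the intended reading in the source the paper cites, and your proof uses it correctly. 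The only cosmetic gap is the implicit assumption that the conditioning events have positive probability, which is supplied by the same induction; this is routine.
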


\begin{proof}[Proof of Theorem~\ref{thm:LLL}]

Let $t=cn^{\frac{4(p-2)}{p(p-1)}}$ with a sufficiently large constant $c>0$. Consider the random $t$-coloring of the edges of $K_{n}$, where each edge receives the color $i\in [t]$ with probability $\frac{1}{t}$ independently. Say that a copy of $K_{p}$ is \emph{bad} if the number of edges in each color class is even. Observe that each bad copy of $K_{p}$ receives at most $\frac{\binom{p}{2}}{2}$ colors. For any given $K_{p}$, the probability that this $K_{p}$ is bad is
\begin{equation*}
  \mathbb{P}[K_{p}\ \textup{is\ bad}] \le \sum\limits_{1\le i\le\binom{p}{2}/2\atop \binom{p}{2}/i\ \textup{is\ even }} t^{i}\cdot\bigg(\frac{i}{t}\bigg)^{\binom{p}{2}}\le O(t^{-\frac{p(p-1)}{4}}).
\end{equation*}
Note that every copy of $K_{p}$ is independent of all but at most $D:=\binom{p}{2}\binom{n-2}{p-2}$ many other $K_{p}$, then by Lov\'{a}sz Local Lemma, the result follows since $e\cdot(D+1)\cdot\mathbb{P}[K_{p}\ \textup{is\ bad}]<1$.

\end{proof}

\section{Coloring functions}\label{sec:ColoringFunction}
\subsection{Description of the coloring functions}

 We will employ a simpler and slightly different version of the edge-coloring function $\psi$ of $K_{n}$ compared with that in~\cite{2023CPC6688}. Indeed, our coloring function is also a modification of the coloring functions of Conlon-Fox-Lee-Sudakov~\cite{2015PLMSCFLS} and Mubayi~\cite{1998Mubayi43}. In particular, this coloring inherits all of the previously known properties. Next we describe it in details and introduce some properties of this coloring.

Let $\beta$ be a sufficiently large integer and $\alpha=\beta^{3}$. For $d=\{0,1,2,3\}$, let $r_{d}=\beta^{d}$ and $a_{d}=\beta^{3-d}$. The vertex set of $K_{n}$ is defined to be $\left\{0,1\right\}^\alpha$, and each vertex of $K_n$ can be presented as a binary vector of length $\alpha$. Then for each vector $\boldsymbol{v}\in \left\{0,1 \right\}^{\alpha}$ and $d\in\{0,1,2,3\}$, we can partition this vector into $a_{d}$ blocks and write $\boldsymbol{v}$ as 
\begin{equation*}
    \boldsymbol{v}=\big(\boldsymbol{v}_{1}^{(d)},\boldsymbol{v}_2^{(d)},\ldots,\boldsymbol{v}_{a_{d}}^{(d)}\big),
\end{equation*}
where $\boldsymbol{v}_{i}^{(d)}\in\{0,1\}^{r_{d}}$ for each $i=1,2,\ldots,a_{d}$. In particular when $d=0$, note that $r_{0}=1$ implies that $b_{0}=0$, thus $(\boldsymbol{v}_{1}^{(0)},\boldsymbol{v}_2^{(0)},\ldots,\boldsymbol{v}_{\alpha}^{(0)})$ is exactly the normal binary representation of $\boldsymbol{v}$ and $\boldsymbol{v}_{i}^{(0)}\in\{0,1\}$ for any $1\le i\le\alpha$, that is, $\boldsymbol{v}=(v_{1},v_{2},\ldots,v_{\alpha})$. Moreover, if $d=3$, then $\boldsymbol{v}=\boldsymbol{v}_{1}^{(3)}$.

Next we define several auxiliary families of functions. Let $\boldsymbol{v}:=\{v_{1},v_{2},\ldots,v_{\alpha}\}$ and $\boldsymbol{w}:=\{w_{1},w_{2},\ldots,w_{\alpha}\}$ be two vertices in $\{0,1\}^{\alpha}$. For any $d\in\{0,1,2\}$, we define the functions $\eta_{d}$ as
\begin{center}
    $\eta_{d}(\boldsymbol{v},\boldsymbol{w})=\left\{\begin{matrix}
 \bigg(i,\left\{\boldsymbol{v}_{i}^{(d)},\boldsymbol{w}_i^{(d)} \right\}\bigg)&\textup{if } &\boldsymbol{v}\neq\boldsymbol{w} \\
 0 & \textup{if } &\boldsymbol{v}=\boldsymbol{w} \\
\end{matrix}\right.$,
\end{center}
where $i$ is the minimum index such that $\boldsymbol{v}_i^{(d)}\neq \boldsymbol{w}_i^{(d)}$. Furthermore, for $\boldsymbol{v},\boldsymbol{w}\in \left\{0,1 \right\}^\alpha$ and for each $d\in\{0,1,2\}$, define
\begin{equation*}
    \xi_{d}(\boldsymbol{v},\boldsymbol{w})=\bigg(\eta_{d}(\boldsymbol{v}_{1}^{(d+1)},\boldsymbol{w}_{1}^{(d+1)}),\ldots,\eta_{d}(\boldsymbol{v}_{a_{d+1}}^{(d+1)},\boldsymbol{w}_{a_{d+1}}^{(d+1)})\bigg).
\end{equation*}
Then let $c(\boldsymbol{v},\boldsymbol{w})=(\xi_{2}(\boldsymbol{v},\boldsymbol{w}),\xi_{1}(\boldsymbol{v},\boldsymbol{w}),\xi_{0}(\boldsymbol{v},\boldsymbol{w}))$. In particular, we will repeatedly take advantage of $\xi_{2}(\boldsymbol{v},\boldsymbol{w})=\eta_{2}(\boldsymbol{v},\boldsymbol{w})$ in the proof of our main result.

We then assume the vectors in $\{0,1\}^{\beta}$ are lexicographically ordered. For an ordered pair of vertices $\boldsymbol{v}\preceq \boldsymbol{w}$ and $i\in[\beta]$, define
\begin{center}
    $\delta_{i}(\boldsymbol{v},\boldsymbol{w})=\left\{\begin{matrix}
 +1&if &\boldsymbol{v}_{i}^{(2)}\prec \boldsymbol{w}_{i}^{(2)} \\
 0&if&\boldsymbol{v}_{i}^{(2)} = \boldsymbol{w}_{i}^{(2)} \\
 -1& if &\boldsymbol{v}_{i}^{(2)}\succ\boldsymbol{w}_{i}^{(2)} \\
\end{matrix}\right.$.
\end{center}

We further define
\begin{center}
    $\Delta(\boldsymbol{v},\boldsymbol{w})=\big(\delta_{1}(\boldsymbol{v},\boldsymbol{w}),\ldots,\delta_{\beta}(\boldsymbol{v},\boldsymbol{w})\big)$.
\end{center}

Finally, let
\begin{equation*}
   \psi(\boldsymbol{v},\boldsymbol{w})=(c(\boldsymbol{v},\boldsymbol{w}),\Delta(\boldsymbol{v},\boldsymbol{w})). 
\end{equation*}

Therefore, we can see the number of colors used in function $\Delta$ is at most $3^{\beta}$. The number of colors used in $c$ is at most $\le e^{O(\beta^{2}\log{\beta})}$, as shown in~\cite{2015PLMSCFLS}. Thus the total number of colors we used is $e^{O(\beta^{2}\log{\beta})}=n^{o(1)}$ as $n=e^{\Omega(\beta^{3})}$.

\subsection{Properties of the coloring functions}

Under the coloring function $\psi$, we say that a copy of $K_{4}=\{\aaa,\bb,\cc,\dd\}\subseteq V(G)$ is a striped $K_{4}$, for which (up to isomorphism) $\psi(\aaa\bb)=\psi(\cc\dd)$, $\psi(\aaa\cc)=\psi(\bb\dd)$, $\psi(\aaa\dd)=\psi(\bb\cc)$, $\psi(\aaa\bb)\neq \psi(\aaa\cc)$, $\psi(\aaa\bb)\neq \psi(\aaa\dd)$ and $\psi(\aaa\cc)\neq \psi(\aaa\dd)$.
We will take advantage of the following properties already proven in~\cite{2018CPC55} and~\cite{2023CPC6688}. Let $V(K_{5}):=\{\aaa,\bb,\cc,\dd,\ee\}$. Let $H$ be a graph whose edges are colored by $s$ colors, namely $[s]=\{1,2,\ldots,s\}$, we say $H$ is of \emph{color type} $(a_{1},a_{2},\ldots,a_{s})$, if each color $i$ appears exactly $a_{i}$ times.

\begin{prop}[\cite{2023CPC6688}]\label{prop:Key}\
Under coloring function $\psi$, the followings hold.
    \begin{enumerate}
       \item [\textup{(1)}] Any colored $K_{5}$ (up to isomorphism) with one of the following properties are forbidden:
       
       \begin{itemize}
        \item $\psi(\aaa,\bb)=\psi(\cc,\dd)$ and $\psi(\aaa,\cc)=\psi(\aaa,\dd)$.
        \item $\psi(\aaa,\bb)=\psi(\bb,\cc)=\psi(\cc,\dd)$ and $\psi(\aaa,\cc)=\psi(\cc,\ee)=\psi(\dd,\ee)$.
        \item $\psi(\aaa,\bb)=\psi(\aaa,\ee)=\psi(\cc,\ee)$ and $\psi(\aaa,\dd)=\psi(\dd,\ee)=\psi(\bb,\cc)$.
       \end{itemize}
      
        \item[\textup{(2)}] There is no striped $K_{4}$ under the edge-coloring $\psi$. Moreover, there is no $K_{4}$ of color type $(2,2,2)$.
        \item[\textup{(3)}] Every copy of $K_{5}$ receives at least $4$ distinct colors.
        \item[\textup{(4)}] There is no monochromatic odd cycle of any length.
    \end{enumerate}
\end{prop}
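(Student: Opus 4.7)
The plan is to verify the four items by combining the nested block structure of $c=(\xi_2,\xi_1,\xi_0)$ with the sign vector $\Delta$. The central observation is that $\eta_d(\boldsymbol{u},\boldsymbol{v})=\eta_d(\boldsymbol{u}',\boldsymbol{v}')\neq 0$ pins down (i) a common first-differing block index $i$ at scale $r_d$ and (ii) a common unordered pair of blocks at that index; in particular all four vectors agree on the initial $i-1$ scale-$r_d$ blocks and each has block $i$ drawn from a fixed two-element set. An equality of full $\psi$-colors therefore propagates a rigid correspondence among the endpoints at every scale, and this is the only mechanism driving all four items.

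For item (1) I would handle each of the three forbidden configurations by starting at the coarsest scale. From the equalities of $\xi_2$-values one extracts a common first-differing scale-$2$ index and a common block pair; a short case analysis on whether the two such indices (arising from the two listed color identities in each bullet) coincide either forces an immediate contradiction at scale $2$, or leaves an alignment on which to invoke $\xi_1$ and then $\xi_0$, each refinement shrinking the scale of the would-be conflict. If a configuration survives all three scales, the sign vector $\Delta$ breaks the remaining ambiguity via an ordering argument. This mirrors the analysis of the corresponding $K_{5}$'s in~\cite{2023CPC6688}; since our simplified $\psi$ preserves every ingredient used there, only modest bookkeeping changes are needed to port each case.

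Items (2) and (3) then reduce to (1) together with the striped-$K_4$ analysis of Conlon--Fox--Lee--Sudakov~\cite{2015PLMSCFLS}. A striped $K_4$ with three matched color pairs triggers exactly the propagation that would collapse two of its vertices; a $K_4$ of type $(2,2,2)$ is either striped or contains two adjacent equal-colored edges (the first bullet of (1)), so both sub-claims in (2) fall out. For (3), any $K_5$ using at most three colors contains a $K_4$ that is either monochromatic, striped, of type $(2,2,2)$, or matches one of the configurations in (1), and each possibility is already excluded.

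Item (4) is handled purely at scale $d=2$: along a putative monochromatic cycle $\boldsymbol{v}_1 \boldsymbol{v}_2 \cdots \boldsymbol{v}_k \boldsymbol{v}_1$, the shared value of $\eta_2$ fixes a common block index $i$ and a two-element pool $\{X,Y\}$, and consecutive vertices must then carry opposite values at block $i$, forcing $k$ to be even. The main obstacle throughout is not conceptual but organizational: each reduction in (1) must be verified compatible across all three scales and with the $\Delta$-component simultaneously, and one must confirm that the simplifications relative to~\cite{2023CPC6688} do not remove information needed to close any surviving case. This bookkeeping, particularly the interaction of $\Delta$ with block-pool constraints at scale $2$ in the second and third bullets of (1), is where most of the effort of a full write-up would go.
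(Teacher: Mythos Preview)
The paper does not give its own proof of this proposition: it is stated with the citation~\cite{2023CPC6688} and prefaced by ``We will take advantage of the following properties already proven in~\cite{2018CPC55} and~\cite{2023CPC6688}.'' So there is nothing to compare your argument against here beyond the cited literature. Your sketch is a faithful high-level summary of how those references proceed: item~(4) via the two-value alternation at scale~$2$, item~(2) via the Conlon--Fox--Lee--Sudakov striped-$K_4$ argument plus the first bullet of~(1), and item~(1) by the scale-by-scale propagation with $\Delta$ breaking residual symmetries.

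One small caution on your reduction for~(3): the claim that a $K_5$ with at most three colors must contain a $K_4$ matching one of the listed patterns is not quite self-evident, since the second and third bullets of~(1) are genuinely five-vertex configurations, not $K_4$'s. The actual route in~\cite{2018CPC55} is a direct enumeration of all possible $\le 4$-colored $K_5$'s surviving the Mubayi/CFLS constraints (this is exactly the list reproduced in Figure~\ref{fig:4colored} of the present paper), and~(3) is read off from that enumeration. Your outline would need to either reproduce that enumeration or supply a separate argument bridging the four-vertex obstructions to the five-vertex bound; as written, that step is a gap in the sketch rather than a simple consequence of~(1) and~(2).
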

\begin{figure}[bhtp]
    \centering
    \includegraphics[width=12cm]{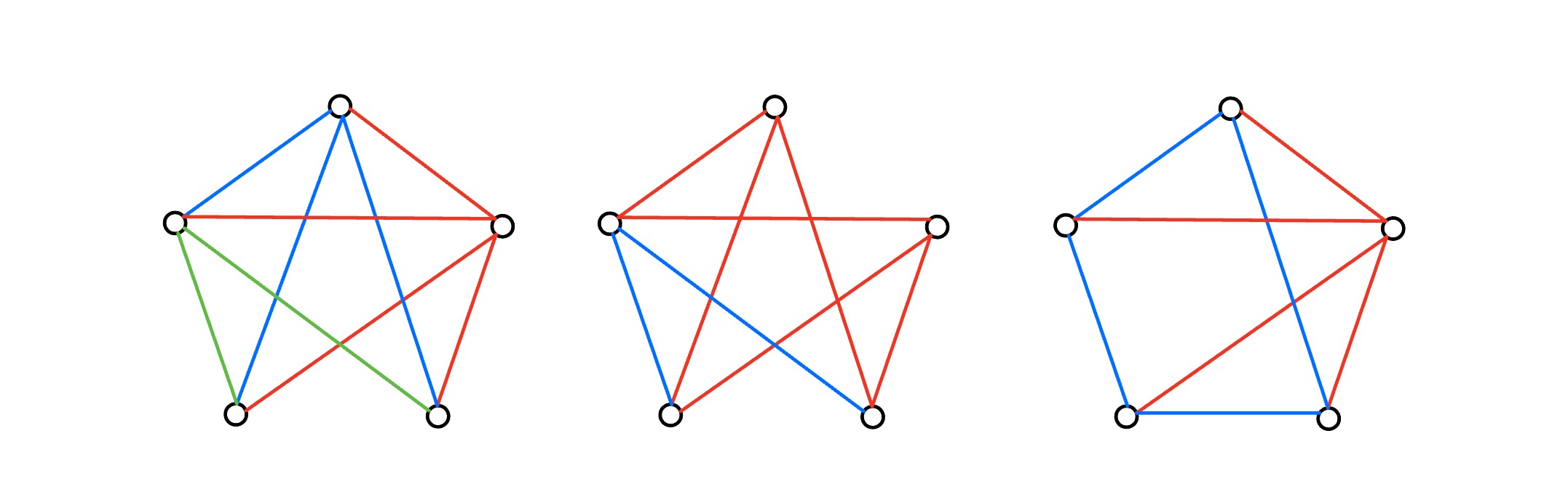}
    \caption{Possible colored $K_{5}$ (up to isomorphism) with at most $4$ colors under coloring $\psi$, where the colors of the missing edges can be chosen arbitrarily}
    \label{fig:4colored}
\end{figure}

\subsection{All possible colored $K_{5}$ of color type $(2,2,2,2,2)$}
By Proposition~\ref{prop:Key}~(3), every copy of $K_{5}$ receives at least $4$ colors, then the possible color types are $(2,2,2,4)$ and $(2,2,2,2,2)$. 
By the argument in~\cite{2018CPC55}, under the coloring function $\psi$, all of the distinct colored $K_{5}$ (up to isomorphism) with at most $4$ colors can be listed in Figure~\ref{fig:4colored} (Also, see, Figure~3 in~\cite{2018CPC55}). Obviously, none of the colored subgraphs in Figure~\ref{fig:4colored} can induce a colored $K_{5}$ of color type $(2,2,2,4)$.

Then our main goal in this section is to show the following result. 

\begin{figure}[bhtp]
    \centering
    \includegraphics[width=14cm]{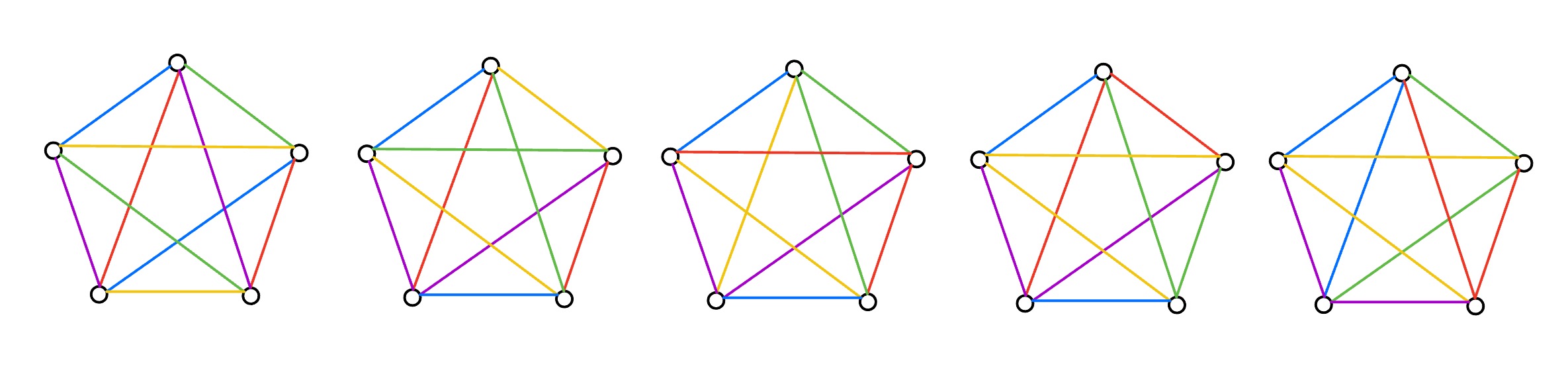}
    \caption{Five possible colored $K_{5}$ of color type $(2,2,2,2,2)$ (up to isomorphism) under coloring $\psi$ }
    \label{fig:possibleK5}
\end{figure}
\begin{prop}\label{Allpossible}
    Under coloring function $\psi$, all of the possible colored $K_{5}$ of color type $(2,2,2,2,2)$ can be listed in Figure~\ref{fig:possibleK5}.
\end{prop}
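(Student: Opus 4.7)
The plan is to enumerate the $(2,2,2,2,2)$-colorings of $K_{5}$ up to $S_{5}$-equivalence and then discard those violating Proposition~\ref{prop:Key}. Since each color class consists of exactly two edges and any two distinct edges of $K_{5}$ either share a vertex or are vertex-disjoint, I would call each color class a \emph{path pair} or a \emph{matching pair}. Let $m$ be the number of matching pairs and $p=5-m$ the number of path pairs; the partition of $E(K_{5})$ into the five pairs is what must be classified.

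Next I would identify which items of Proposition~\ref{prop:Key} impose genuine constraints. Part~(3) is automatic since five colors are already in use; part~(4) is vacuous since no two-edge color class contains a cycle; and the second and third patterns of part~(1) require three edges of a common color, which cannot occur here. Only two constraints remain: (A) on any four vertices of $K_{5}$ at most two color classes lie entirely inside that $K_{4}$ (from part~(2), equivalently ruling out any $K_{4}$ of color type $(2,2,2)$, including the striped case); and (B) no four vertices $\aaa,\bb,\cc,\dd$ admit a matching pair $\{\aaa\bb,\cc\dd\}$ together with a path pair $\{\aaa\cc,\aaa\dd\}$ (from the first pattern of part~(1)).

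The main step is a case analysis on $m\in\{0,1,\ldots,5\}$. For each vertex $v$, write $z_{v}$ for the number of color classes in which $v$ is the center of a path pair and $y_{v}$ for the number in which $v$ lies on exactly one edge; the identity $2z_{v}+y_{v}=4$ at each vertex together with $\sum_{v}z_{v}=p$ tightly constrains how the path-pair centers are distributed over the five vertices for each value of $m$. For each admissible distribution of centers, I would enumerate, up to the $S_{5}$-action on $V(K_{5})$, the ways to place the $m$ matching pairs and $p$ path pairs so that their union is exactly $E(K_{5})$, and then filter the candidates using (A) and (B).

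The hard part is carrying out the enumeration cleanly without missing or double-counting cases: some values of $m$ (notably $m=5$, corresponding to a proper $5$-edge-coloring of $K_{5}$) admit many raw configurations that must be collapsed modulo $S_{5}$, while the mixed cases with $1\le m\le 4$ require careful use of (B), which is the subtler filter because it rules out configurations that (A) cannot detect (for instance, a matching pair together with a path pair on four vertices where the two remaining edges receive distinct fresh colors). Once the case analysis is exhausted, I expect precisely five surviving configurations up to isomorphism, and a direct inspection will identify them with the five diagrams shown in Figure~\ref{fig:possibleK5}.
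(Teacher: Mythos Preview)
Your proposal is correct and follows essentially the same approach as the paper: both arguments classify $(2,2,2,2,2)$-colorings by the number of path pairs (the paper calls these monochromatic copies of $K_{1,2}$), observe that each vertex can be the center of at most one path pair (which the paper derives from the no-$(2,2,2)$-$K_{4}$ constraint, your (A)), and then use the first forbidden pattern in Proposition~\ref{prop:Key}(1) (your constraint (B)) as the decisive filter in the mixed cases. The only cosmetic difference is bookkeeping---the paper records, for each center vertex $v$, a vector $(s,t)$ counting path pairs containing $v$ as a leaf and path pairs inside $K_{5}\setminus\{v\}$, and packages (B) as the claim that $(s,1)$ never occurs---while you track $z_{v},y_{v}$; the resulting case analysis is the same.
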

\begin{proof}[Proof of Proposition~\ref{Allpossible}]
    
For a copy of star $K_{1,2}$, we denote the internal vertex by \emph{core} and each of its leaves by \emph{root}. First, we can consider a copy of $K_{5}$ as a copy of $K_{4}$ together with a \emph{special vertex} which is adjacent to the other four vertices, and obviously there are $5$ special vertices. For our purpose, in a given colored $K_{5}$, we only need to consider those special vertices which appear to be a core vertex of a monochromatic copy of $K_{1,2}$. Then we classify the colored $K_{5}$ according to the number of monochromatic copies of $K_{1,2}$ in them. We will employ the following rules to represent each special vertex in a colored $K_{5}$. Let $V(K_{5})=\{\aaa,\bb,\cc,\dd,\ee\}$, for a special vertex $\boldsymbol{v}\in V(K_{5})$, we associate $\boldsymbol{v}$ with a vector $(s,t)\in \{0,1,2\}\times\{0,1,2\}$, where $s$ is the number of monochromatic copies of $K_{1,2}$ which contains $\boldsymbol{v}$ as a root and $t$ is the number of monochromatic copies of $K_{1,2}$ in the $K_{4}$ induced by subset $V(K_{5})\setminus \{\boldsymbol{v}\}$. Note that for any colored $K_{5}$, suppose that there exist two monochromatic copies of $K_{1,2}$ in some $K_{1,4}$, since there is no copy of $K_{4}$ of type $(2,2,2)$ by Proposition~\ref{prop:Key}, this colored $K_{5}$ cannot be of color type $(2,2,2,2,2)$. Therefore, we can assume that each vertex can appear at most once as a core of some monochromatic copy of $K_{1,2}$. Then we can use $k$ vectors to represent a colored $K_{5}$, where $k$ is the number of monochromatic copies of $K_{1,2}$ in the colored $K_{5}$ and obviously $0\le k\le 5$. 

\begin{claim}\label{claim:s1s1}
    There is no special vertex that can be associated with the vector $(s,1)$ for any $s\in\{0,1,2\}$.
\end{claim}
\begin{poc}
   Suppose that there is some vertex $\boldsymbol{v}$ which is associated with the vector $(s,1)$, we consider the colored $K_{4}$ induced by $V(K_{5})\setminus \{\boldsymbol{v}\}=\{\boldsymbol{u}_{1},\boldsymbol{u}_{2},\boldsymbol{u}_{3},\boldsymbol{u}_{4}\}$. After suitably reorder these four vertices, one can check that the case of $\psi(\boldsymbol{u}_{1},\boldsymbol{u}_{2})=\psi(\boldsymbol{u}_{3},\boldsymbol{u}_{4})$ and $\psi(\boldsymbol{u}_{1},\boldsymbol{u}_{3})=\psi(\boldsymbol{u}_{1},\boldsymbol{u}_{4})$ will occur, which is a contradiction to the second result in Proposition~\ref{prop:Key}~(1). 
\end{poc}

According to Claim~\ref{claim:s1s1}, when the number of monochromatic copies of $K_{1,2}$ belongs to $\{0,1,4,5\}$, one can easily check by hand that the first, the second, the fourth and the last graphs in Figure~\ref{fig:possibleK5} are the only possible cases (up to isomorphism), respectively. In order to be more reader-friendly, we will not provide a very detailed proof for these four simple cases. Below we will carefully analyze the cases that the number of monochromatic copies of $K_{1,2}$ is $2$ or $3$.

When the number of monochromatic copies of $K_{1,2}$ is $2$, then there are two special vertices, namely $\boldsymbol{u}$ and $\boldsymbol{v}$, which are associated with two vectors. Moreover, these two vectors can only be of the forms $(0,1)$ and $(1,0)$, however, by Claim~\ref{claim:s1s1}, the vector $(0,1)$ is forbidden. Then both of them should be $(1,0)$, which implies that some color class of this colored $K_{5}$ will contain at least $3$ colors. Thus those colored $K_{5}$ cannot be of color type $(2,2,2,2,2)$, that means, there is no colored $K_{5}$ of color type $(2,2,2,2,2)$ with $2$ monochromatic copies of $K_{1,2}$.

It remains to consider the case that the number of monochromatic copies of $K_{1,2}$ is $3$, which turns to be slightly complicated. Let $V(K_{5})=\{\boldsymbol{v}_{1},\boldsymbol{v}_{2},\boldsymbol{v}_{3},\boldsymbol{v}_{4},\boldsymbol{v}_{5}\}$ and without loss of generality, we can asuume that the three special vertices are $\boldsymbol{v}_{1}$, $\boldsymbol{v}_{2}$ and $\boldsymbol{v}_{3}$, which are associated with vectors $\boldsymbol{x}_{1}$, $\boldsymbol{x}_{2}$ and $\boldsymbol{x}_{3}$, respectively. Also by Claim~\ref{claim:s1s1}, $\boldsymbol{x}_{1},\boldsymbol{x}_{2},\boldsymbol{x}_{3}\in\{(0,2),(2,0)\}$, as $(1,1)$ is forbidden.

\begin{claim}\label{claim:No11}
There is exactly one of $\boldsymbol{x}_{1}$, $\boldsymbol{x}_{2}$ and $\boldsymbol{x}_{3}$ to be $(2,0)$.
\end{claim}
\begin{poc}
    We first claim that there is at most one of $\boldsymbol{x}_{1}$, $\boldsymbol{x}_{2}$ and $\boldsymbol{x}_{3}$ to be $(2,0)$. To see this, without loss of generality, we can assume $\boldsymbol{x}_{1}=(2,0)$, as $\boldsymbol{v}_{2}$ and $\boldsymbol{v}_{3}$ are also special vertices, we can see that both of the monochromatic copies of $K_{1,2}$ whose cores are $\boldsymbol{v}_{2}$ and $\boldsymbol{v}_{3}$ contain $\boldsymbol{v}_{1}$ as a root. Furthermore, since $\boldsymbol{v}_{1}$, we have $\boldsymbol{v}_{1}\boldsymbol{v}_{4}\boldsymbol{v}_{5}$ together form a copy of monochromatic $K_{1,2}$ with $\boldsymbol{v}_{1}$ being the core vertex, which implies that there are at most one monochromatic copy of $K_{1,2}$ containing $\boldsymbol{v}_{2}$ as a root and also at most one monochromatic copy of $K_{1,2}$ containing $\boldsymbol{v}_{3}$ as a root. However $\boldsymbol{x}_{2},\boldsymbol{x}_{3}\in\{(2,0),(0,2)\}$, thus $\boldsymbol{x}_{2}=\boldsymbol{x}_{3}=(0,2)$. On the other hand, suppose $\boldsymbol{x}_{1}=\boldsymbol{x}_{2}=\boldsymbol{x}_{3}=(0,2)$, in this case, we also have that $\boldsymbol{v}_{1}\boldsymbol{v}_{4}\boldsymbol{v}_{5}$, $\boldsymbol{v}_{2}\boldsymbol{v}_{4}\boldsymbol{v}_{5}$ and $\boldsymbol{v}_{3}\boldsymbol{v}_{4}\boldsymbol{v}_{5}$ form three monochromatic copies of $K_{1,2}$, where $\boldsymbol{v}_{1},\boldsymbol{v}_{2},\boldsymbol{v}_{3}$ are the core vertices respectively. However, then $\psi(\boldsymbol{v}_{1},\boldsymbol{v}_{2})$, $\psi(\boldsymbol{v}_{1},\boldsymbol{v}_{3})$, $\psi(\boldsymbol{v}_{2},\boldsymbol{v}_{3})$ and $\psi(\boldsymbol{v}_{4},\boldsymbol{v}_{5})$ should be pairwise distinct, which yields that this colored $K_{5}$ can not be of color type $(2,2,2,2,2)$. The proof of this claim is finished.
\end{poc}

By Claim~\ref{claim:No11}, we can see any colored $K_{5}$ of color type $(2,2,2,2,2)$ with $3$ monochromatic copies of $K_{1,2}$ should contain the subgraph (up to isomorphism) in Figure~\ref{fig:ThreeK12}. Then one can further consider the colors of the missing edges and easily check that any such colored graphs are isomorphic to the middle graph in Figure~\ref{fig:possibleK5}. 

\begin{figure}[bhtp]
    \centering
    \includegraphics[width=6cm]{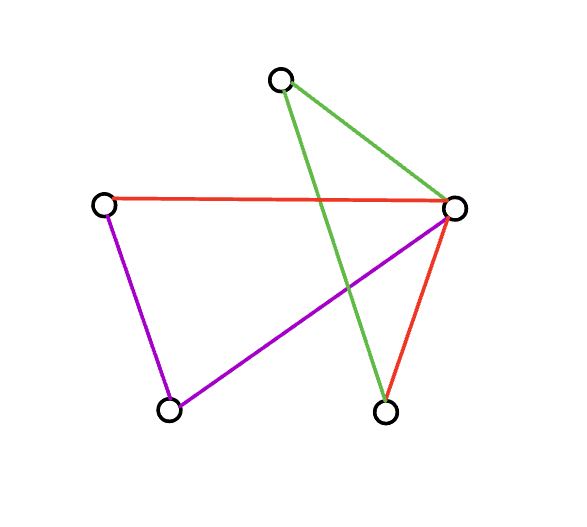}
    \caption{The subgraph contained in any colored $K_{5}$ of color type $(2,2,2,2,2)$ with $3$ monochromatic copies of $K_{1,2}$ (up to isomorphsim)}
    \label{fig:ThreeK12}
\end{figure}

 Based on the detailed analysis so far, we can classify all of the possible colored $K_{5}$ (up to isomorphism) according to the number of monochromatic copies of $K_{1,2}$ in them and we list all of them in Figure~\ref{fig:possibleK5}.
\end{proof}

Therefore, our main goal in the next section, is to prove that all of these five possible colored $K_{5}$ of color type $(2,2,2,2,2)$ cannot exist under the coloring function $\psi$ in details.

\section{Proof of Theorem~\ref{thm:K5}}\label{sec:FormalProof}
Based on the edge-coloring function $\psi$, to show that there is no colored $K_{5}$ such that the number of edges in each color class is even, by Proposition~\ref{Allpossible}, it suffices to prove that there is no $K_{5}$ of color type $(2,2,2,2,2)$ listed in Figure~\ref{fig:possibleK5}. Let $\left\{\aaa,\bb,\cc,\dd,\ee\right\}$ be the vertex set of $K_5$.
 
We divide the whole proof into five parts to show that none of the colored graphs $K_{5}$ (up to isomorphism) in Figure~\ref{fig:possibleK5} exists under the coloring function $\psi$. The beginning two cases (zero and five monochromatic copies of $K_{1,2}$) are relatively simple, and the proofs of remaining cases are complicated, where we need to take advantage of the coloring functions $\delta_{i}$. Recall that $c(\boldsymbol{v},\boldsymbol{w})=(\xi_{2}(\boldsymbol{v},\boldsymbol{w}),\xi_{1}(\boldsymbol{v},\boldsymbol{w}),\xi_{0}(\boldsymbol{v},\boldsymbol{w}))$ and in particular $\xi_{2}(\boldsymbol{v},\boldsymbol{w})=\eta_{2}(\boldsymbol{v},\boldsymbol{w})$. 

We would like to emphasize beforehand that attentive readers may observe the similarity of certain statements in the proofs of various scenarios. However, to ensure a comprehensive demonstration, we have chosen to incorporate as many details as possible. Naturally, in cases of high symmetry, we may omit repetitive descriptions. The primary motivation behind this decision stems from the fact that, indeed, there exist certain discrepancies among different situations.

\subsection{No monochromatic copy of $K_{1,2}$}
Suppose that there is no monochromatic $K_{1,2}$ in the $K_{5}$, by symmetry, we can assume that $\psi(\aaa,\bb)=\psi(\cc,\ee)$, $\psi(\aaa,\cc)=\psi(\dd,\ee)$, $\psi(\aaa,\dd)=\psi(\bb,\cc)$, $\psi(\aaa,\ee)=\psi(\bb,\dd)$ and $\psi(\bb,\ee)=\psi(\cc,\dd)$ and they are pairwise distinct (see, the first graph in Figure~\ref{fig:possibleK5}), which yields that
\begin{itemize}
    \item $\eta_{2}(\aaa,\bb)=\eta_{2}(\cc,\ee)=(i,\left\{\boldsymbol{x}_{1},\boldsymbol{y}_{1}\right\})$;
    \item $\eta_{2}(\aaa,\cc)=\eta_{2}(\dd,\ee)=(j,\left\{\boldsymbol{x}_{2},\boldsymbol{y}_{2}\right\})$;
    \item $\eta_{2}(\aaa,\dd)=\eta_{2}(\bb,\cc)=(k,\left\{\boldsymbol{x}_{3},\boldsymbol{y}_{3}\right\})$;
    \item $\eta_{2}(\aaa,\ee)=\eta_{2}(\bb,\dd)=(\ell,\left\{\boldsymbol{x}_{4},\boldsymbol{y}_{4}\right\})$;
    \item $\eta_{2}(\bb,\ee)=\eta_{2}(\cc,\dd)=(h,\left\{\boldsymbol{x}_{5},\boldsymbol{y}_{5}\right\})$,
\end{itemize}
where $\boldsymbol{a},\bb,\cc,\dd,\ee\in\{0,1\}^{\alpha}$, $\boldsymbol{x}_{s},\boldsymbol{y}_{s}\in\{0,1\}^{r_{2}}$ for $s=1,2,3,4,5$, and $i,j,k,\ell,h\in [a_{2}]$.

Without loss of generality, we can further assume that $i=\min\left\{i,j,k,\ell,h\right\}$, $\boldsymbol{a}_i^{(2)}=\boldsymbol{x}_{1}$ and  
$\boldsymbol{b}_i^{(2)}=\boldsymbol{y}_{1}$. By the definition of the coloring function $\eta_{2}$, it is obvious that $\boldsymbol{x}_{1}\neq\boldsymbol{y}_{1}$. Furthermore, note that $\eta_{2}(\aaa,\bb)=\eta_{2}(\cc,\ee)=(i,\left\{\boldsymbol{x}_{1},\boldsymbol{y}_{1}\right\})$, if $\boldsymbol{c}_i^{(2)}=\boldsymbol{x}_{1}$ and
$\boldsymbol{e}_i^{(2)}=\boldsymbol{y}_{1}$, as $\eta_{2}(\aaa,\ee)=\eta_{2}(\bb,\dd)=(\ell,\left\{\boldsymbol{x}_{4},\boldsymbol{y}_{4}\right\})$ and $\aaa_{i}^{(2)}\neq \ee_{i}^{(2)}$, we have $\ell\le i$. As $i=\min\{i,j,k,\ell,h\}$, we conclude that $i=\ell$. Therefore, $\{\boldsymbol{x}_{1},\boldsymbol{y}_{1}\}=\{\boldsymbol{x}_{4},\boldsymbol{y}_{4}\}$, which yields $\boldsymbol{d}_i^{(2)}=\boldsymbol{x}_{1}$. However, $\boldsymbol{d}_i^{(2)}\neq \boldsymbol{e}_i^{(2)}$ implies that $j=i$ since $\eta_{2}(\dd,\ee)=(j,\left\{\boldsymbol{x}_{2},\boldsymbol{y}_{2}\right\})$. Now we can see that $\eta_{2}(\aaa,\cc)=(j,\left\{\boldsymbol{x}_{2},\boldsymbol{y}_{2}\right\})$, $\{\boldsymbol{x}_{1},\boldsymbol{y}_{1}\}=\{\boldsymbol{x}_{2},\boldsymbol{y}_{2}\}$ and $i=j$ together lead a contradiction to $\aaa_{i}^{(2)}=\cc_{i}^{(2)}=\boldsymbol{x}_{1}$.

On the other hand, if $\boldsymbol{e}_i^{(2)}=\boldsymbol{x}_{1}$ and
$\boldsymbol{c}_i^{(2)}=\boldsymbol{y}_{1}$, similarly, we conclude that $\boldsymbol{d}_i^{(2)}=\boldsymbol{x}_{1}$, as $\eta_{2}(\bb,\ee)=\eta_{2}(\cc,\dd)=(h,\left\{\boldsymbol{x}_{5},\boldsymbol{y}_{5}\right\})$ implies $i=h$ and $\left\{\boldsymbol{x}_{5},\boldsymbol{y}_{5}\right\}=\{\boldsymbol{x}_{1},\boldsymbol{y}_{1}\}$. However, $\boldsymbol{a}_{i}^{(2)}\neq\boldsymbol{c}_{i}^{(2)}$ implies that $j=i$ since $\eta_{2}(\aaa,\cc)=(j,\left\{\boldsymbol{x}_{2},\boldsymbol{y}_{2}\right\})$ and the definition of index $j$. Moreover, $\boldsymbol{d}_{i}^{(2)}=\ee_{i}^{(2)}=\boldsymbol{x}_{1}$ gives a  contradiction to $\dd_{j}^{(2)}\neq\ee_{j}^{(2)}$ and $i=j$.

The case of $\aaa_{i}^{(2)}=\boldsymbol{y}_{1}$ and $\bb_{i}^{(2)}=\boldsymbol{x}_{1}$ can be shown in a very similar way, we omit the repeated argument.

\subsection{The number of monochromatic copies of $K_{1,2}$ is 5}

Suppose that there is a copy of $K_{5}$ that is formed by $5$ edge-disjoint monochromatic copies of $K_{1,2}$, by symmetry we can assume that $\psi(\aaa,\bb)=\psi(\aaa,\cc)$, $\psi(\aaa,\dd)=\psi(\dd,\ee)$, $\psi(\aaa,\ee)=\psi(\ee,\cc)$, $\psi(\bb,\cc)=\psi(\cc,\dd)$ and $\psi(\bb,\ee)=\psi(\bb,\dd)$. 
We also focus on the coloring function $\eta_{2}$ as follows.
\begin{itemize}
    \item $\eta_{2}(\aaa,\bb)=\eta_{2}(\aaa,\cc)=(i,\left\{\boldsymbol{x}_{1},\boldsymbol{y}_{1}\right\})$;
    \item $\eta_{2}(\aaa,\dd)=\eta_{2}(\dd,\ee)=(j,\left\{\boldsymbol{x}_{2},\boldsymbol{y}_{2}\right\})$;
    \item $\eta_{2}(\aaa,\ee)=\eta_{2}(\ee,\cc)=(k,\left\{\boldsymbol{x}_{3},\boldsymbol{y}_{3}\right\})$;
    \item $\eta_{2}(\bb,\cc)=\eta_{2}(\cc,\dd)=(\ell,\left\{\boldsymbol{x}_{4},\boldsymbol{y}_{4}\right\})$;
    \item $\eta_{2}(\bb,\ee)=\eta_{2}(\bb,\dd)=(h,\left\{\boldsymbol{x}_{5},\boldsymbol{y}_{5}\right\})$,
\end{itemize}
where $\boldsymbol{a},\bb,\cc,\dd,\ee\in\{0,1\}^{\alpha}$, $\boldsymbol{x}_{s},\boldsymbol{y}_{s}\in\{0,1\}^{r_{2}}$ for $s=1,2,3,4,5$, and $i,j,k,\ell,h\in [a_{2}]$.

Without loss of generality, we also assume that $i=\min\left\{i,j,k,\ell,h\right\}$, $\boldsymbol{a}_i^{(2)}=\boldsymbol{x}_{1}$ and 
 $\boldsymbol{b}_i^{(2)}=\boldsymbol{c}_i^{(2)}=\boldsymbol{y}_{1}$. Furthermore, note that $\eta_{2}(\bb,\cc)=\eta_{2}(\cc,\dd)=(\ell,\left\{\boldsymbol{x}_{4},\boldsymbol{y}_{4}\right\})$, if $\boldsymbol{d}_i^{(2)}\neq \boldsymbol{y}_{1}$, then we have $\ell\le i$ since $\cc_{i}^{(2)}=\boldsymbol{y}_{1}$ and $\ell$ is the smallest index such that $\cc_{\ell}^{(2)}\neq\dd_{\ell}^{(2)}$. Moreover, $i=\min\{i,j,k,\ell,h\}$ implies $\ell=i$, however $\boldsymbol{b}_{i}^{(2)}=\boldsymbol{c}_{i}^{(2)}$ is impossible by definition of coloring function $\eta_{2}$. Therefore, $\dd_{i}^{(2)}=\boldsymbol{y}_{1}$. Similarly, we claim that $\ee_{i}^{(2)}\neq \boldsymbol{y}_{1}$. To see this, if $\ee_{i}^{(2)}= \boldsymbol{y}_{1}$, then $\aaa_{i}^{(2)}\neq \dd_{i}^{(2)}$ implies that $j=i$, which is a contradiction to $\dd_{i}^{(2)}=\ee_{i}^{(2)}= \boldsymbol{y}_{1}$. Finally, $\ee_{i}^{(2)}\neq\boldsymbol{y}_{1}$ leads to $h=i$ since $\eta_{2}(\bb,\ee)=\eta_{2}(\bb,\dd)=(h,\left\{\boldsymbol{x}_{5},\boldsymbol{y}_{5}\right\})$, which is a contradiction to $\boldsymbol{b}_{i}^{(2)}=\dd_{i}^{(2)}$ by the definition of $\eta_{2}$. Therefore, there is no such colored $K_{5}$, we also omit the analysis of other symmetric cases.

\subsection{The number of monochromatic copies of $K_{1,2}$ is 1}\label{subsubsec:Star1}

 In this case, we can assume that $\psi(\aaa,\bb)=\psi(\cc,\dd)$, $\psi(\bb,\cc)=\psi(\cc,\ee)$, $\psi(\aaa,\dd)=\psi(\bb,\ee)$, $\psi(\aaa,\ee)=\psi(\bb,\dd)$ and $\psi(\aaa,\cc)=\psi(\dd,\ee)$ and they are pairwise distinct (see, the second graph in Figure~\ref{fig:possibleK5}), which yields that

\begin{itemize}
    \item $\eta_{2}(\aaa,\bb)=\eta_{2}(\cc,\dd)=(i,\left\{\boldsymbol{x}_{1},\boldsymbol{y}_{1}\right\})$;
    \item $\eta_{2}(\bb,\cc)=\eta_{2}(\cc,\ee)=(j,\left\{\boldsymbol{x}_{2},\boldsymbol{y}_{2}\right\})$;
    \item $\eta_{2}(\aaa,\ee)=\eta_{2}(\bb,\dd)=(k,\left\{\boldsymbol{x}_{3},\boldsymbol{y}_{3}\right\})$;
    \item $\eta_{2}(\aaa,\dd)=\eta_{2}(\bb,\ee)=(\ell,\left\{\boldsymbol{x}_{4},\boldsymbol{y}_{4}\right\})$;
    \item $\eta_{2}(\aaa,\cc)=\eta_{2}(\dd,\ee)=(h,\left\{\boldsymbol{x}_{5},\boldsymbol{y}_{5}\right\})$,
\end{itemize}
where $\boldsymbol{a},\bb,\cc,\dd,\ee\in\{0,1\}^{\alpha}$, $\boldsymbol{x}_{s},\boldsymbol{y}_{s}\in\{0,1\}^{r_{2}}$, $\boldsymbol{x}_{s}\prec\boldsymbol{y}_{s}$ for $s=1,2,3,4,5$ under lexicographical order, and $i,j,k,\ell,h\in [a_{2}]$.

First by $\eta_{2}(\aaa,\bb)=\eta_{2}(\cc,\dd)=(i,\left\{\boldsymbol{x}_{1},\boldsymbol{y}_{1}\right\})$, without loss of generality, we can assume that $\aaa_{i}^{(2)}=\boldsymbol{x}_{1}$ and $\bb_{i}^{(2)}=\boldsymbol{y}_{1}$, also notice that the case of $\aaa_{i}^{(2)}=\boldsymbol{y}_{1}$ and $\bb_{i}^{(2)}=\boldsymbol{x}_{1}$ can be simlilarly dealt with. Furthermore, regarding the situations of $\cc_{i}^{(2)}$, $\dd_{i}^{(2)}$ and $\ee_{i}^{(2)}$, there are four different cases, so we partition our discussion into several pieces, where the argument in Subcase~2.2 is slightly more complicated. 

\begin{Case}
    \item If $\cc_{i}^{(2)}=\boldsymbol{x}_{1}$ and $\dd_{i}^{(2)}=\boldsymbol{y}_{1}$:  
    \begin{Case}
        \item If $\ee_{i}^{(2)}\neq \boldsymbol{y}_{1}$, as $\bb_{i}^{(2)}\neq \cc_{i}^{(2)}$, we have that $j\le i$ by definition of the indices $i$ and $j$. Furthermore, $\bb_{i}^{(2)}\neq \ee_{i}^{(2)}$ implies that $j\neq i$, giving that $j<i$. By $\eta_{2}(\bb,\cc)=\eta_{2}(\cc,\ee)=(j,\left\{\boldsymbol{x}_{2},\boldsymbol{y}_{2}\right\})$, we know that $\bb_{j}^{(2)}=\ee_{j}^{(2)}\neq\cc_{j}^{(2)}$ then $\boldsymbol{a}_{j}^{(2)}=\bb_{j}^{(2)}$ as $j<i$ and $i$ is the minimum index such that $\boldsymbol{a}_{i}^{(2)}\neq\bb_{i}^{(2)}$. For the same reason, we have $\dd_{j}^{(2)}=\boldsymbol{c}_{j}^{(2)}$. Now we know $\dd_{j}^{(2)}\neq\aaa_{j}^{(2)}$, however, $\bb_{j}^{(2)}=\ee_{j}^{(2)}$, thus $\ell<j$ according to $\eta_{2}(\aaa,\dd)=\eta_{2}(\bb,\ee)=(\ell,\left\{\boldsymbol{x}_{4},\boldsymbol{y}_{4}\right\})$ and the definition of the index $\ell$. Moreover, $\eta_{2}(\bb,\ee)=(\ell,\left\{\boldsymbol{x}_{4},\boldsymbol{y}_{4}\right\})$ implies that $\bb_{\ell}^{(2)}\neq\ee_{\ell}^{(2)}$, thus $\cc_{\ell}^{(2)}$ should be different from at least one of $\bb_{\ell}^{(2)}$ and $\ee_{\ell}^{(2)}$, which yields that $j\le \ell$ since $j$ is the smallest index such that $\bb_{j}^{(2)}=\ee_{j}^{(2)}\neq\cc_{j}^{(2)}$, a contradiction to $\ell<j$.

        \item If $\ee_{i}^{(2)}=\boldsymbol{y}_{1}$, then $\bb_{i}^{(2)}=\ee_{i}^{(2)}\neq\cc_{i}^{(2)}$, as $j$ is the minimum index such that $\bb_{j}^{(2)}\neq\cc_{j}^{(2)}$, we have $j\le i$. Observe that if $j<i$, then we can apply the identical argument to show a contradiction as above (consider $\eta_{2}(\aaa,\dd)=\eta_{2}(\bb,\ee)=(\ell,\left\{\boldsymbol{x}_{4},\boldsymbol{y}_{4}\right\})$ and discuss the relation between $\ell$ and $j$), therefore, it suffices to show that $j=i$ is impossible. If $j=i$, then $\aaa_{i}^{(2)}\neq  \ee_{i}^{(2)}$, and $\bb_{i}^{(2)}=\dd_{i}^{(2)}$. Moreover, as $\eta_{2}(\aaa,\ee)=\eta_{2}(\bb,\dd)=(k,\left\{\boldsymbol{x}_{3},\boldsymbol{y}_{3}\right\})$, we have $k<i$ by definition of indices $i$ and $k$. Similarly, $k<i$ also implies that $\aaa_{k}^{(2)}=\bb_{k}^{(2)}\neq \cc_{k}^{(2)}=\dd_{k}^{(2)}=\ee_{k}^{(2)}$. Observe that $\bb_{k}^{(2)}\neq\cc_{k}^{(2)}=\ee_{k}^{(2)}$, by $\eta_{2}(\bb,\cc)=\eta_{2}(\cc,\ee)=(j,\left\{\boldsymbol{x}_{2},\boldsymbol{y}_{2}\right\})$, we have $j<k$ because $j$ is the minimum index such that $\bb_{j}^{(2)}=\ee_{j}^{(2)}\neq\cc_{j}^{(2)}$, which is a contradiction to $k<i=j$.
    \end{Case}

    \item If $\cc_{i}^{(2)}=\boldsymbol{y}_{1}$ and $\dd_{i}^{(2)}=\boldsymbol{x}_{1}$: 
    \begin{Case}
        \item If $\ee_{i}^{(2)}\neq\boldsymbol{y}_{1}$, then $\ee_{i}^{(2)}\neq\cc_{i}^{(2)}$ and $\bb_{i}^{(2)}=\cc_{i}^{(2)}$, then by $\eta_{2}(\bb,\cc)=\eta_{2}(\cc,\ee)=(j,\left\{\boldsymbol{x}_{2},\boldsymbol{y}_{2}\right\})$ and the definition of indices $i$ and $j$, we have $j<i$, which also yields that $\aaa_{j}^{(2)}=\bb_{j}^{(2)}$ and $\cc_{j}^{(2)}=\dd_{j}^{(2)}$. As $\bb_{j}^{(2)}=\ee_{j}^{(2)}$ and $\aaa_{j}^{(2)}\neq \dd_{j}^{(2)}$, by $\eta_{2}(\bb,\ee)=\eta_{2}(\aaa,\dd)=(\ell,\left\{\boldsymbol{x}_{4},\boldsymbol{y}_{4}\right\})$, as $\ell$ is the smallest index such that $\aaa_{\ell}^{(2)}\neq \dd_{\ell}^{(2)}$, we then have $\ell<j$. However, since $\bb_{\ell}^{(2)}\neq \ee_{\ell}^{(2)}$, $\cc_{\ell}^{(2)}$ must be different from at least one of $\bb_{\ell}^{(2)}$ and $\ee_{\ell}^{(2)}$, which yields that $j\le \ell$, a contradiction to $\ell<j$.
        
        \item If $\ee_{i}^{(2)}=\boldsymbol{y}_{1}$, then $\bb_{i}^{(2)}=\ee_{i}^{(2)}=\cc_{i}^{(2)}$, as $j$ is the minimum index such that $\bb_{j}^{(2)}\neq\cc_{j}^{(2)}$, we have $j\neq i$. Observe that if $j<i$, then we can apply the identical argument to show a contradiction as above (also consider $\eta_{2}(\aaa,\dd)=\eta_{2}(\bb,\ee)=(\ell,\left\{\boldsymbol{x}_{4},\boldsymbol{y}_{4}\right\}$), therefore, we only need to show that $j>i$ is impossible. To see this, suppose $j>i$, note that $\bb_{\ell}^{(2)}\neq \ee_{\ell}^{(2)}$, we have $\cc_{\ell}^{(2)}$ is different from at least one of $\bb_{\ell}^{(2)}$ and $\ee_{\ell}^{(2)}$, which yields that $j<\ell$. Then  $\aaa_{j}^{(2)}=\dd_{j}^{(2)}$, since $\eta_{2}(\aaa,\dd)=\eta_{2}(\bb,\ee)=(\ell,\left\{\boldsymbol{x}_{4},\boldsymbol{y}_{4}\right\}$) and $\ell$ is the smallest index such that $\aaa_{\ell}^{(2)}\neq\dd_{\ell}^{(2)}$. Next we consider $\eta_{2}(\aaa,\cc)=\eta_{2}(\dd,\ee)=(h,\left\{\boldsymbol{x}_{5},\boldsymbol{y}_{5}\right\})$, we claim that $h<j$ always holds. Indeed, note that $\bb_{j}^{(2)}=\ee_{j}^{(2)}\neq\cc_{j}^{(2)}$, suppose that $\aaa_{j}^{(2)}=\cc_{j}^{(2)}$, then $\dd_{j}^{(2)}\neq\ee_{j}^{(2)}$, which implies that $h<j$ by definition of the indices $j$ and $h$. On the other hand, suppose $\aaa_{j}^{(2)}\neq\cc_{j}^{(2)}$, we can see $\cc_{j}^{(2)}\notin\{\dd_{j}^{(2)},\ee_{j}^{(2)}\}$, which implies that $h<j$. Now we have $\ell>j> \max\{i,h\}$.
        
         Suppose $\aaa_{\ell}^{(2)}=\boldsymbol{x}_{4}$ and $\dd_{\ell}^{(2)}=\boldsymbol{y}_{4}$, if $\bb_{\ell}^{(2)}=\boldsymbol{x}_{4}$ and $\ee_{\ell}^{(2)}=\boldsymbol{y}_{4}$, by $\delta_{\ell}(\aaa,\bb)=0$ and $\psi(\aaa,\bb)=\psi(\cc,\dd)$, which yields that $\delta_{\ell}(\cc,\dd)=0$, that means, $\cc_{\ell}^{(2)}=\dd_{\ell}^{(2)}=\boldsymbol{y}_{4}$. However $\delta_{\ell}(\cc,\ee)=0$ and $\delta_{\ell}(\bb,\cc)\neq 0$ together form a contradiction to $\psi(\bb,\cc)=\psi(\cc,\ee)$. Secondly, if $\bb_{\ell}^{(2)}=\boldsymbol{y}_{4}$ and $\ee_{\ell}^{(2)}=\boldsymbol{x}_{4}$, by $\delta_{\ell}(\aaa,\bb)=+1$ and $\psi(\aaa,\bb)=\psi(\cc,\dd)$, we have $\delta_{\ell}(\dd,\cc)=+1$, which indicates that $\dd_{\ell}^{(2)}=\boldsymbol{y}_{4}\prec\cc_{\ell}^{(2)}$. Then we have $\delta_{\ell}(\aaa,\cc)=+1$ and $\delta_{\ell}(\dd,\ee)=-1$, but it leads to a contradiction to $\psi(\aaa,\cc)=\psi(\dd,\ee)$. We can apply the almost identical analysis to show the contradictions under the conditions $\aaa_{\ell}^{(2)}=\ee_{\ell}^{(2)}=\boldsymbol{y}_{4},\bb_{\ell}^{(2)}=\dd_{\ell}^{(2)}=\boldsymbol{x}_{4}$ and $\aaa_{\ell}^{(2)}=\bb_{\ell}^{(2)}=\boldsymbol{y}_{4},\ee_{\ell}^{(2)}=\dd_{\ell}^{(2)}=\boldsymbol{x}_{4}$ respectively, we leave them to the interested readers.
          
    \end{Case}
\end{Case}

\subsection{The number of monochromatic copies of $K_{1,2}$ is 3}\label{subsubsec:star3}
 In this case, we can assume that $\psi(\aaa,\bb)=\psi(\cc,\dd)$, $\psi(\bb,\cc)=\psi(\cc,\ee)$, $\psi(\aaa,\dd)=\psi(\aaa,\ee)$, $\psi(\bb,\ee)=\psi(\dd,\ee)$ and $\psi(\aaa,\cc)=\psi(\bb,\dd)$ and they are pairwise distinct (see, the middle graph in Figure~\ref{fig:possibleK5}), which yields that

\begin{itemize}
    \item $\eta_{2}(\aaa,\bb)=\eta_{2}(\cc,\dd)=(i,\left\{\boldsymbol{x}_{1},\boldsymbol{y}_{1}\right\})$;
    \item $\eta_{2}(\bb,\cc)=\eta_{2}(\cc,\ee)=(j,\left\{\boldsymbol{x}_{2},\boldsymbol{y}_{2}\right\})$;
    \item $\eta_{2}(\aaa,\dd)=\eta_{2}(\aaa,\ee)=(k,\left\{\boldsymbol{x}_{3},\boldsymbol{y}_{3}\right\})$;
    \item $\eta_{2}(\bb,\ee)=\eta_{2}(\dd,\ee)=(\ell,\left\{\boldsymbol{x}_{4},\boldsymbol{y}_{4}\right\})$;
    \item $\eta_{2}(\aaa,\cc)=\eta_{2}(\bb,\dd)=(h,\left\{\boldsymbol{x}_{5},\boldsymbol{y}_{5}\right\})$,
\end{itemize}
where $\boldsymbol{a},\bb,\cc,\dd,\ee\in\{0,1\}^{\alpha}$, $\boldsymbol{x}_{s},\boldsymbol{y}_{s}\in\{0,1\}^{r_{2}}$, $\boldsymbol{x}_{s}\prec\boldsymbol{y}_{s}$ for $s=1,2,3,4,5$ under lexicographical order, and $i,j,k,\ell,h\in [a_{2}]$.

First by $\eta_{2}(\aaa,\bb)=\eta_{2}(\cc,\dd)=(i,\left\{\boldsymbol{x}_{1},\boldsymbol{y}_{1}\right\})$, without loss of generality, we assume that $\aaa_{i}^{(2)}=\boldsymbol{x}_{1}$ and $\bb_{i}^{(2)}=\boldsymbol{y}_{1}$. We also partition the argument into several cases according to the distribution of $\cc_{i}^{(2)}$, $\dd_{i}^{(2)}$ and $\ee_{i}^{(2)}$.

\begin{Case}
    \item If $\cc_{i}^{(2)}=\boldsymbol{x}_{1}$ and $\dd_{i}^{(2)}=\boldsymbol{y}_{1}$: 
    \begin{Case}
        \item If $\ee_{i}^{(2)}\neq \boldsymbol{y}_{1}$, as $\bb_{i}^{(2)}\neq \cc_{i}^{(2)}$, we have that $j\le i$ by definition of indices $i$ and $j$. Furthermore, $\bb_{i}^{(2)}\neq \ee_{i}^{(2)}$ implies that $j\neq i$, giving that $j<i$. By $\eta_{2}(\bb,\cc)=\eta_{2}(\cc,\ee)=(j,\left\{\boldsymbol{x}_{2},\boldsymbol{y}_{2}\right\})$, we know that $\bb_{j}^{(2)}=\ee_{j}^{(2)}\neq\cc_{j}^{(2)}$ then $\boldsymbol{a}_{j}^{(2)}=\bb_{j}^{(2)}$ as $j<i$ and $i$ is the minimum index such that $\boldsymbol{a}_{i}^{(2)}\neq\bb_{i}^{(2)}$. For the same reason, we have $\dd_{j}^{(2)}=\boldsymbol{c}_{j}^{(2)}$. Now we know $\dd_{j}^{(2)}\neq\ee_{j}^{(2)}$, however, $\bb_{j}^{(2)}=\ee_{j}^{(2)}$, thus $\ell<j$. Next, we consider $\eta_{2}(\bb,\ee)=\eta_{2}(\dd,\ee)=(\ell,\left\{\boldsymbol{x}_{4},\boldsymbol{y}_{4}\right\})$, we can see that $\bb_{\ell}^{(2)}=\dd_{\ell}^{(2)}\neq \ee_{\ell}^{(2)}$, since $\ell<j<i$, then $\aaa_{\ell}^{(2)}=\bb_{\ell}^{(2)}$ and $\boldsymbol{c}_{\ell}^{(2)}=\dd_{\ell}^{(2)}$, which indicates that $\aaa_{\ell}^{(2)}=\bb_{\ell}^{(2)}=\boldsymbol{c}_{\ell}^{(2)}=\dd_{\ell}^{(2)}$. However, $\ee_{\ell}^{(2)}\neq\cc_{\ell}^{(2)}$ is a contradiction to the fact that $j$ is the minimum index such that $\boldsymbol{c}_{j}^{(2)}\neq\ee_{j}^{(2)}$.

        \item If $\ee_{i}^{(2)}=\boldsymbol{y}_{1}$, then $\bb_{i}^{(2)}=\ee_{i}^{(2)}\neq\cc_{i}^{(2)}$, as $j$ is the minimum index such that $\bb_{j}^{(2)}\neq\cc_{j}^{(2)}$, we have $j\le i$. Observe that if $j<i$, then we can run the identical argument to show a contradiction as above (consider the relation between $\ell$ and $j$), therefore, we only need to show that $j=i$ is impossible. If $j=i$, since $\aaa_{i}^{(2)}\neq\dd_{i}^{(2)}$ and $\aaa_{i}^{(2)}\neq\ee_{i}^{(2)}$, we have $k\le i=j$. Suppose that $k<i$, by $\eta_{2}(\aaa,\dd)=\eta_{2}(\aaa,\ee)=(k,\left\{\boldsymbol{x}_{3},\boldsymbol{y}_{3}\right\})$, we have $\dd_{k}^{(2)}=\ee_{k}^{(2)}\neq \aaa_{k}^{(2)}$. As $k<i$ and $i$ is the minimum index such that $\aaa_{i}^{(2)}\neq \bb_{i}^{(2)}$ and $\cc_{i}^{(2)}\neq\dd_{i}^{(2)}$, we have $\aaa_{k}^{(2)}=\bb_{k}^{(2)}\neq \cc_{k}^{(2)}=\dd_{k}^{(2)}=\ee_{k}^{(2)}$. However, $j=i$ is the minimum index such that $\bb_{j}^{(2)}\neq\cc_{j}^{(2)}$, which contradicts $\bb_{k}^{(2)}\neq \cc_{k}^{(2)}$ and $k<i=j$.
        
        We then consider the case that $k=i=j$, by $\eta_{2}(\bb,\ee)=\eta_{2}(\dd,\ee)=(\ell,\left\{\boldsymbol{x}_{4},\boldsymbol{y}_{4}\right\})$, we know that $\boldsymbol{b}_{\ell}^{(2)}\neq\ee_{\ell}^{(2)}$, by $\bb_{j}^{(2)}=\ee_{j}^{(2)}$, and $\cc_{\ell}^{(2)}$ is different from at least one of $\bb_{\ell}^{(2)}$ and $\ee_{\ell}^{(2)}$, we have $\ell> j=i=k$. We also consider $\eta_{2}(\aaa,\cc)=\eta_{2}(\bb,\dd)=(h,\left\{\boldsymbol{x}_{5},\boldsymbol{y}_{5}\right\})$, we first show $h>\ell>i=j=k$, to see this, $\bb_{h}^{(2)}\neq \dd_{h}^{(2)}$ and $\bb_{\ell}^{(2)}=\dd_{\ell}^{(2)}$, as $\ee_{h}^{(2)}$ is different from at least one of $\bb_{h}^{(2)}$ and $\dd_{h}^{(2)}$ and $\ell$ is the minimum index such that $\bb_{\ell}^{(2)}=\dd_{\ell}^{(2)}\neq\ee_{\ell}^{(2)}$, we have $h>\ell$.
        
       Now suppose that $\aaa_{h}^{(2)}=\boldsymbol{x}_{5}$ and $\cc_{h}^{(2)}=\boldsymbol{y}_{5}$, we claim that $\bb_{h}^{(2)}=\boldsymbol{x}_{5}$ and $\dd_{h}^{(2)}=\boldsymbol{y}_{5}$. Note that if $\bb_{h}^{(2)}=\boldsymbol{y}_{5}$ and $\dd_{h}^{(2)}=\boldsymbol{x}_{5}$, we can see $\aaa\prec\cc\prec\min\{\bb,\dd,\ee\}$ under lexicographical order, which leads to $\delta_{h}(\aaa,\bb)=+1$ and $\delta_{h}(\cc,\dd)=-1$, a contradiction to $\psi(\aaa\bb)=\psi(\cc\dd)$. 

       Finally, there are in total three possible situations in $\ee_{h}^{(2)}$, that is, $\ee_{h}^{(2)}\preceq\boldsymbol{x}_{5}$, $\boldsymbol{x}_{5}\prec\ee_{h}^{(2)}\prec \boldsymbol{y}_{5}$ and $\boldsymbol{y}_{5}\preceq \ee_{h}^{(2)}$. First, if $\ee_{h}^{(2)}\preceq\boldsymbol{x}_{5}$, then $\delta_{h}(\aaa,\dd)=+1$ and $\delta_{h}(\aaa,\ee)\neq +1$, a contradiction. Second, if $\boldsymbol{x}_{5}\prec\ee_{h}^{(2)}\prec \boldsymbol{y}_{5}$, note that $\ee\prec \boldsymbol{b}$ if and only if $\bb_{\ell}^{(2)}=\boldsymbol{y}_{4}$ and $\ee_{\ell}^{(2)}=\boldsymbol{x}_{4}$ (respectively, $\boldsymbol{b}\prec\ee$ if and only if $\bb_{\ell}^{(2)}=\boldsymbol{x}_{4}$ and $\ee_{\ell}^{(2)}=\boldsymbol{y}_{4}$), then $\delta_{h}(\ee,\bb)=-1$ and $\delta_{h}(\ee,\dd)=+1$, (respectively, $\delta_{h}(\bb,\ee)=+1$ and $\delta_{h}(\dd,\ee)=-1$), a contradiction. Lastly, if $\boldsymbol{y}_{5}\preceq \ee_{h}^{(2)}$, then $\delta_{h}(\cc,\bb)=-1$ and $\delta_{h}(\cc,\ee)\neq -1$, a contradiction. 
    \end{Case}

    \item If $\cc_{i}^{(2)}=\boldsymbol{y}_{1}$ and $\dd_{i}^{(2)}=\boldsymbol{x}_{1}$:
    \begin{Case}
        \item If $\ee_{i}^{(2)}\neq\boldsymbol{y}_{1}$, then $\ee_{i}^{(2)}\neq\cc_{i}^{(2)}$ and $\bb_{i}^{(2)}=\cc_{i}^{(2)}$, then by $\eta_{2}(\bb,\cc)=\eta_{2}(\cc,\ee)=(j,\left\{\boldsymbol{x}_{2},\boldsymbol{y}_{2}\right\})$ and the definition of indices $i$ and $j$, we have $j<i$, which also yields that $\aaa_{j}^{(2)}=\bb_{j}^{(2)}$ and $\cc_{j}^{(2)}=\dd_{j}^{(2)}$. As $\bb_{j}^{(2)}=\ee_{j}^{(2)}\neq\cc_{j}^{(2)}=\dd_{j}^{(2)}$, by $\eta_{2}(\bb,\ee)=\eta_{2}(\dd,\ee)=(\ell,\left\{\boldsymbol{x}_{4},\boldsymbol{y}_{4}\right\})$, as $\ell$ is the smallest index such that $\bb_{\ell}^{(2)}=\dd_{\ell}^{(2)}\neq \ee_{\ell}^{(2)}$, we have $\ell<j$. However, $\cc_{\ell}^{(2)}$ is different from at least one of $\bb_{\ell}^{(2)}$ and $\ee_{\ell}^{(2)}$, which gives $j\le \ell$, a contradiction to $\ell<j$.
        
        \item If $\ee_{i}^{(2)}=\boldsymbol{y}_{1}$, then $\aaa_{i}^{(2)}=\dd_{i}^{(2)}\neq \ee_{i}^{(2)}$, moreover, as $\eta_{2}(\aaa,\dd)=\eta_{2}(\aaa,\ee)=(k,\left\{\boldsymbol{x}_{3},\boldsymbol{y}_{3}\right\})$, $\aaa_{k}^{(2)}\neq\dd_{k}^{(2)}=\ee_{k}^{(2)}$, we have $k<i$ by definition of indices $i$ and $k$. Similarly, $k<i$ also implies that $\aaa_{k}^{(2)}=\bb_{k}^{(2)}\neq \cc_{k}^{(2)}=\dd_{k}^{(2)}=\ee_{k}^{(2)}$. Observe that $\bb_{k}^{(2)}\neq\cc_{k}^{(2)}=\ee_{k}^{(2)}$, by $\eta_{2}(\bb,\cc)=\eta_{2}(\cc,\ee)=(j,\left\{\boldsymbol{x}_{2},\boldsymbol{y}_{2}\right\})$, we have $j<k$ because $j$ is the minimum index such that $\bb_{j}^{(2)}=\ee_{j}^{(2)}\neq\cc_{j}^{(2)}$. Since $j<i$, we have $\aaa_{j}^{(2)}=\bb_{j}^{(2)}=\ee_{j}^{(2)}$ and $\cc_{j}^{(2)}=\dd_{j}^{(2)}$. However, $\aaa_{j}^{(2)}\neq\dd_{j}^{(2)}$ is a contradiction to the fact that $k$ is the minimum index such that $\aaa_{k}^{(2)}\neq\dd_{k}^{(2)}$ and $j<k$.
    \end{Case}
\end{Case}

\subsection{The number of monochromatic copies of $K_{1,2}$ is 4}\label{subsubsec:star4}

In this case, we can assume that $\psi(\aaa,\bb)=\psi(\cc,\dd)$, $\psi(\bb,\cc)=\psi(\cc,\ee)$, $\psi(\aaa,\dd)=\psi(\aaa,\ee)$, $\psi(\bb,\ee)=\psi(\dd,\ee)$ and $\psi(\aaa,\cc)=\psi(\bb,\dd)$ and they are pairwise distinct (see, the fourth graph in Figure~\ref{fig:possibleK5}), which yields that

\begin{itemize}
    \item $\eta_{2}(\aaa,\bb)=\eta_{2}(\cc,\dd)=(i,\left\{\boldsymbol{x}_{1},\boldsymbol{y}_{1}\right\})$;
    \item $\eta_{2}(\bb,\cc)=\eta_{2}(\cc,\ee)=(j,\left\{\boldsymbol{x}_{2},\boldsymbol{y}_{2}\right\})$;
    \item $\eta_{2}(\aaa,\dd)=\eta_{2}(\dd,\ee)=(k,\left\{\boldsymbol{x}_{3},\boldsymbol{y}_{3}\right\})$;
    \item $\eta_{2}(\bb,\dd)=\eta_{2}(\bb,\ee)=(\ell,\left\{\boldsymbol{x}_{4},\boldsymbol{y}_{4}\right\})$;
    \item $\eta_{2}(\aaa,\cc)=\eta_{2}(\aaa,\ee)=(h,\left\{\boldsymbol{x}_{5},\boldsymbol{y}_{5}\right\})$,
\end{itemize}
where $\boldsymbol{a},\bb,\cc,\dd,\ee\in\{0,1\}^{\alpha}$, $\boldsymbol{x}_{s},\boldsymbol{y}_{s}\in\{0,1\}^{r_{2}}$, $\boldsymbol{x}_{s}\prec\boldsymbol{y}_{s}$ for $s=1,2,3,4,5$ under lexicographical order, and $i,j,k,\ell,h\in [a_{2}]$.

First by $\eta_{2}(\aaa,\bb)=\eta_{2}(\cc,\dd)=(i,\left\{\boldsymbol{x}_{1},\boldsymbol{y}_{1}\right\})$, without loss of generality, we assume that $\aaa_{i}^{(2)}=\boldsymbol{x}_{1}$ and $\bb_{i}^{(2)}=\boldsymbol{y}_{1}$. Then it suffices to analyze the following four situations. 

\begin{Case}
    \item If $\cc_{i}^{(2)}=\boldsymbol{x}_{1}$ and $\dd_{i}^{(2)}=\boldsymbol{y}_{1}$:  
    \begin{Case}
        \item If $\ee_{i}^{(2)}\neq \boldsymbol{y}_{1}$, as $\bb_{i}^{(2)}\neq \cc_{i}^{(2)}$, we have that $j\le i$ by definition of indices $i$ and $j$. Furthermore, $\bb_{i}^{(2)}\neq \ee_{i}^{(2)}$ implies that $j\neq i$, giving that $j<i$. By $\eta_{2}(\bb,\cc)=\eta_{2}(\cc,\ee)=(j,\left\{\boldsymbol{x}_{2},\boldsymbol{y}_{2}\right\})$, we know that $\bb_{j}^{(2)}=\ee_{j}^{(2)}\neq\cc_{j}^{(2)}$ then $\boldsymbol{a}_{j}^{(2)}=\bb_{j}^{(2)}$ as $j<i$ and $i$ is the minimum index such that $\boldsymbol{a}_{i}^{(2)}\neq\bb_{i}^{(2)}$. For the same reason, we have $\dd_{j}^{(2)}=\boldsymbol{c}_{j}^{(2)}$. Now we know $\dd_{j}^{(2)}\neq\ee_{j}^{(2)}$, however, $\bb_{j}^{(2)}=\ee_{j}^{(2)}$, thus $\ell<j$ by $\eta_{2}(\bb,\dd)=\eta_{2}(\bb,\ee)=(\ell,\left\{\boldsymbol{x}_{4},\boldsymbol{y}_{4}\right\})$. Furthermore,  we can see that $\bb_{\ell}^{(2)}\neq \ee_{\ell}^{(2)}=\dd_{\ell}^{(2)}$. Since $\ell<j<i$, then $\aaa_{\ell}^{(2)}=\bb_{\ell}^{(2)}$ and $\boldsymbol{c}_{\ell}^{(2)}=\dd_{\ell}^{(2)}$, which indicates that $\aaa_{\ell}^{(2)}=\bb_{\ell}^{(2)}\neq\boldsymbol{c}_{\ell}^{(2)}=\dd_{\ell}^{(2)}$. However, $\bb_{\ell}^{(2)}\neq\cc_{\ell}^{(2)}$ is a contradiction to the fact that $j$ is the minimum index such that $\boldsymbol{c}_{j}^{(2)}\neq\ee_{j}^{(2)}$.

        \item If $\ee_{i}^{(2)}=\boldsymbol{y}_{1}$, then $\bb_{i}^{(2)}=\ee_{i}^{(2)}\neq\cc_{i}^{(2)}$, as $j$ is the minimum index such that $\bb_{j}^{(2)}\neq\cc_{j}^{(2)}$, we have $j\le i$. Observe that if $j<i$, then we can apply the identical analysis to show a contradiction as Subcase 1.1 (argue the relation between $\ell$ and $j$ by $\eta_{2}(\bb,\dd)=\eta_{2}(\bb,\ee)=(\ell,\left\{\boldsymbol{x}_{4},\boldsymbol{y}_{4}\right\})$), therefore, it suffices to show that $j=i$ also leads to some contradiction. Indeed, if $j=i$, as $\eta_{2}(\aaa,\dd)=\eta_{2}(\dd,\ee)=(k,\left\{\boldsymbol{x}_{3},\boldsymbol{y}_{3}\right\})$, $\aaa_{k}^{(2)}=\ee_{k}^{(2)}\neq\dd_{k}^{(2)}$, and $\aaa_{i}^{(2)}\neq \dd_{i}^{(2)}= \ee_{i}^{(2)}$ we have $k<i$ by definition of indices $i$ and $k$. Similarly, $k<i$ also implies that $\aaa_{k}^{(2)}=\bb_{k}^{(2)}=\ee_{k}^{(2)}\neq \cc_{k}^{(2)}=\dd_{k}^{(2)}$. Observe that $\bb_{k}^{(2)}=\ee_{k}^{(2)}\neq\cc_{k}^{(2)}$, by $\eta_{2}(\bb,\cc)=\eta_{2}(\cc,\ee)=(j,\left\{\boldsymbol{x}_{2},\boldsymbol{y}_{2}\right\})$, we have $j\leq k$ because $j$ is the minimum index such that $\bb_{j}^{(2)}=\ee_{j}^{(2)}\neq\cc_{j}^{(2)}$, which is a contradiction to $k<i=j$.
    \end{Case}

    \item If $\cc_{i}^{(2)}=\boldsymbol{y}_{1}$ and $\dd_{i}^{(2)}=\boldsymbol{x}_{1}$: 
    \begin{Case}
        \item If $\ee_{i}^{(2)}\neq\boldsymbol{y}_{1}$, then $\ee_{i}^{(2)}\neq\cc_{i}^{(2)}$ and $\bb_{i}^{(2)}=\cc_{i}^{(2)}$, then by $\eta_{2}(\bb,\cc)=\eta_{2}(\cc,\ee)=(j,\left\{\boldsymbol{x}_{2},\boldsymbol{y}_{2}\right\})$ and the definition of indices $i$ and $j$, we have $j<i$, which also yields that $\aaa_{j}^{(2)}=\bb_{j}^{(2)}$ and $\cc_{j}^{(2)}=\dd_{j}^{(2)}$. As $\bb_{j}^{(2)}=\ee_{j}^{(2)}\neq\cc_{j}^{(2)}=\dd_{j}^{(2)}$, by $\eta_{2}(\bb,\dd)=\eta_{2}(\bb,\ee)=(\ell,\left\{\boldsymbol{x}_{4},\boldsymbol{y}_{4}\right\})$, as $\ell$ is the smallest index such that $\bb_{\ell}^{(2)}\neq \dd_{\ell}^{(2)}= \ee_{\ell}^{(2)}$, we then have $\ell<j$. However, $\bb_{\ell}^{(2)}\neq \ee_{\ell}^{(2)}$ by $\eta_{2}(\bb,\dd)=\eta_{2}(\bb,\ee)=(\ell,\left\{\boldsymbol{x}_{4},\boldsymbol{y}_{4}\right\})$, thus $\cc_{\ell}^{(2)}$ is different from at least one of $\bb_{\ell}^{(2)}$ and $\ee_{\ell}^{(2)}$, which yields that $j\le \ell$, a contradiction to $\ell<j$.
        
        \item If $\ee_{i}^{(2)}=\boldsymbol{y}_{1}$, then $\aaa_{i}^{(2)}=\dd_{i}^{(2)}\neq \ee_{i}^{(2)}$, moreover, as $\eta_{2}(\aaa,\dd)=\eta_{2}(\dd,\ee)=(k,\left\{\boldsymbol{x}_{3},\boldsymbol{y}_{3}\right\})$, $\aaa_{k}^{(2)}=\ee_{k}^{(2)}\neq\dd_{k}^{(2)}$, we have $k<i$ by definition of indices $i$ and $k$. Similarly, $k<i$ also implies that $\aaa_{k}^{(2)}=\bb_{k}^{(2)}=\ee_{k}^{(2)}\neq \cc_{k}^{(2)}=\dd_{k}^{(2)}$. Observe that $\cc_{k}^{(2)}\neq\aaa_{k}^{(2)}=\ee_{k}^{(2)}$, by $\eta_{2}(\aaa,\cc)=\eta_{2}(\aaa,\ee)=(h,\left\{\boldsymbol{x}_{5},\boldsymbol{y}_{5}\right\})$, we have $h<k$ because $h$ is the minimum index such that $\cc_{h}^{(2)}=\ee_{h}^{(2)}\neq\aaa_{j}^{(2)}$. However, $\aaa_{h}^{(2)}\neq\ee_{h}^{(2)}$ by $\eta_{2}(\aaa,\cc)=\eta_{2}(\aaa,\ee)=(h,\left\{\boldsymbol{x}_{5},\boldsymbol{y}_{5}\right\})$, therefore $\dd_{h}^{(2)}$ is different from at least one of $\aaa_{h}^{(2)}$ and $\ee_{h}^{(2)}$, resulting in $k\le h$, a contradiction to $h<k$.
    \end{Case}
\end{Case}

\section{Concluding remarks}\label{sec:Remarks}
Based on the similarity to the classical Erdős-Gyárfás problem and the recent developments~\cite{2015PLMSCFLS,2000Mubayi54,1998Mubayi43}, we are led to speculate that Conjecture~\ref{conj:cliques} holds true. Our investigation has produced supporting evidence, such as Theorems~\ref{thm:K5} and~\ref{thm:LLL}. However, this new conjecture presents additional challenges, as we not only discuss the number of colors assigned to each $K_{p}$, but also need to consider their distribution carefully. For instance, when $p=8$, the potential distribution of colored $8$-cliques becomes notably intricate, surpassing what can be managed manually (although leveraging appropriate computer assistance could be beneficial). Consequently, we think that solely relying on existing approaches is insufficient for proving Conjecture~\ref{conj:cliques}. However, this new problem does not impose a requirement for each $K_{p}$ to receive a large number of colors, opening the possibility of employing an entirely different approach that could prove advantageous.

\bibliographystyle{abbrv}
\bibliography{AlonEG}
\end{document}